\documentclass{article}
\usepackage[utf8]{inputenc}
\usepackage{graphicx} 

\usepackage{amsfonts}
\usepackage{mathtools}
\usepackage{amsmath, amsthm, amsopn, amssymb, microtype}
\usepackage{mathrsfs} 
\usepackage[all,cmtip]{xy}
\usepackage{enumerate, tikz, etoolbox, intcalc, caption, subcaption, afterpage}
\usepackage[english]{babel}
\usepackage{enumitem}
\usepackage{csquotes}

\usepackage[colorlinks=true,urlcolor=blue,pdfborder={0 0 0}]{hyperref}
\hypersetup{linkcolor=[rgb]{0.8,0,0}}
\hypersetup{citecolor=[rgb]{0,0.6,0}}
\usepackage[nameinlink]{cleveref}

\newtheorem{theorem}{Theorem}[section]
\newtheorem{definition}[theorem]{Definition}
\newtheorem{lemma}[theorem]{Lemma}
\newtheorem{corollary}[theorem]{Corollary}

\newtheorem{remark}[theorem]{Remark}

\usepackage[left=30mm, top=25mm, right=30mm, bottom=25mm]{geometry} 
\title{Regularity of profinite isomorphisms of hyperbolic $3$-manifolds}
\author{Liam Hanany}
\date{\today}

\begin{document}
\maketitle

\begin{abstract}
We strengthen a result of Liu from his papers \cite{liu2023finite}, \cite{liu2025finite}, by proving that profinite isomorphisms of hyperbolic $3$-manifolds are regular in the sense of Boileau and Friedl \cite{boileau2020profinite}.
\end{abstract}

\section{Introduction}
Our main result is the following theorem.

\begin{theorem}
\label{regularity theorem}
Let $M_1, M_2$ be finite-volume hyperbolic $3$-manifolds. Assume that there exists an isomorphism $\Phi: \widehat{\pi_1(M_1)} \to \widehat{\pi_1(M_2)}.$ Then this isomorphism $\Phi$ is regular in the sense of Definition \ref{fibration_regularity}.
\end{theorem}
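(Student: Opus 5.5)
The plan is to deduce regularity from two inputs: the profinite rigidity of fiberedness and of the Thurston norm, as established by Liu in \cite{liu2023finite}, \cite{liu2025finite}, and the linear-algebraic structure of fibered cones.

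\textbf{Step 1: reduce to a statement about $\Phi^*$.} First, $\Phi$ induces an isomorphism of abelianizations $\widehat{H_1(M_1;\mathbb{Z})}\cong \widehat{H_1(M_2;\mathbb{Z})}$. Passing to free parts and dualizing gives a $\widehat{\mathbb{Z}}$-module isomorphism
\[
\Phi^*\colon H^1(M_2;\widehat{\mathbb{Z}})\to H^1(M_1;\widehat{\mathbb{Z}}).
\]
By Definition \ref{fibration_regularity}, regularity asks that $\Phi^*$ be, up to a scalar $\mu\in\widehat{\mathbb{Z}}^\times$, the $\widehat{\mathbb{Z}}$-linear extension of an isomorphism $\psi\colon H^1(M_2;\mathbb{Z})\to H^1(M_1;\mathbb{Z})$. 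Equivalently, there is some $\mu$ such that $\mu^{-1}\Phi^*$ carries the integral lattice $H^1(M_2;\mathbb{Z})$ onto $H^1(M_1;\mathbb{Z})$.

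\textbf{Step 2: integrality on a spanning set of fibered classes.} Fix a fibered integral class $\phi_0\in H^1(M_2;\mathbb{Z})$. If $M_2$ is not virtually fibered in a suitable sense, I would first pass to a finite cover, which Agol's theorem provides, and transport that cover to $M_1$ through $\Phi$.

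I would then invoke Liu's theorem that a profinite isomorphism carries profinitely fibered classes to fibered classes. In its precise form, $\Phi^*\phi_0=\mu\,\phi_0'$ with $\phi_0'\in H^1(M_1;\mathbb{Z})$ fibered and $\mu\in\widehat{\mathbb{Z}}^\times$. This $\mu$ is the candidate scalar.

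The key point is that the same $\mu$ works for every integral class in the open fibered cone $C$ containing $\phi_0$. To see this, use Liu's profinite invariance of twisted Alexander polynomials, or of the Thurston norm. For $\phi\in C$, the element $\mu^{-1}\Phi^*\phi$ lies in $H^1(M_1;\widehat{\mathbb{Z}})$ and has the fibering property relative to the image cone. I would then show that any $\widehat{\mathbb{Z}}$-combination of lattice points that stays "fibered", in the sense of having the right profinite monodromy data, must be integral. Concretely, write $\phi$ and $\phi_0$ in terms of a lattice basis inside $C$. Each basis element $\phi_i$ satisfies $\Phi^*\phi_i=\mu_i\phi_i'$, and I need $\mu_i=\mu$. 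To get this, compare $\phi_0+\phi_i\in C$ with the decomposition $\mu\phi_0'+\mu_i\phi_i'$. Fiberedness of the sum forces the result to be a unit multiple of an integral class, which forces $\mu_i/\mu\in\widehat{\mathbb{Z}}^\times\cap\mathbb{Q}_{>0}=\{1\}$ once positivity is guaranteed by the Thurston norm being preserved.

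\textbf{Step 3: extend to the whole lattice.} A top-dimensional fibered cone spans $H^1(M_2;\mathbb{Q})$, and its lattice points generate $H^1(M_2;\mathbb{Z})$. Therefore $\mu^{-1}\Phi^*$ restricts to an injection $H^1(M_2;\mathbb{Z})\to H^1(M_1;\mathbb{Z})$. Running the same argument for $\Phi^{-1}$ gives the reverse inclusion, so this injection is the desired isomorphism $\psi$. Dualizing back to $H_1$ gives regularity. If a finite cover was used in Step 2, I would descend to $M_1, M_2$ using transfer and the compatibility of $\Phi$ with corresponding finite-index subgroups.

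\textbf{Main obstacle.} The step I expect to be hardest is the consistency of the scalars $\mu_i$ across the cone in Step 2. This requires a precise form of Liu's result: the $\widehat{\mathbb{Z}}^\times$-ambiguity must be controlled additively, not just for individual classes. My first attempt would use the profinite invariance of the Thurston norm, which is linear on $C$. This gives $\|\mu\phi_0'+\mu_i\phi_i'\|$ as a rational quantity. Combining that with linearity of the norm on the image cone should pin down $\mu_i=\mu$.
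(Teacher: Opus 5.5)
There is a genuine gap, and it begins in Step 1, where you misread Definition \ref{fibration_regularity}. Asking that $\Phi^*=\mu\cdot\psi$ for \emph{some} $\mu\in\widehat{\mathbb{Z}}^\times$ and some integral isomorphism $\psi$ is the definition of $\widehat{\mathbb{Z}}^\times$-regularity. \emph{Regularity} additionally requires $\mu=\pm1$. $\widehat{\mathbb{Z}}^\times$-regularity, together with preservation of the Thurston norm and of fibred classes, is already Liu's theorem (Theorem \ref{Thurston_norm_alignment}). The paper takes it as its starting point, not its conclusion. So Steps 2 and 3, even if completed, only re-derive known input. Step 2 is also only a heuristic: the claim that a $\widehat{\mathbb{Z}}$-combination of lattice classes with ``fibred profinite monodromy data'' must be a unit multiple of an integral class is essentially the whole content of Liu's theorem. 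Nowhere do you address the actual claim, that the scalar is $\pm1$.

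Your method also cannot reach that claim. For a primitive fibred class, the scalar in $\Phi^*\phi_0=\mu\phi_0'$ is determined up to sign, since a unit $\lambda$ with $\lambda v=w$ for primitive integral $v,w$ must be $\pm1$. So the question is \emph{which} element of $\widehat{\mathbb{Z}}^\times$ it is. But every property you exploit holds for $\mu\cdot\psi$ for \emph{every} $\mu$ as soon as it holds for $\psi$: sending integral classes to unit multiples of integral classes, matching Thurston norms, matching fibred cones. The information about $\mu$ lies in how $\Phi$ intertwines the monodromies, namely in the ideal equality $(\Delta_{f_1}(t^\mu))=(\Delta_{f_2}(t))$ of Theorem \ref{equality_of_twisted_alexander_polynomials}. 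Extracting it is genuinely arithmetic, as the paper's argument shows:
\begin{enumerate}
\item Pass to a fibred cover whose Alexander polynomial has a root $\alpha$ off the unit circle (Theorem \ref{virtual_homological_eigenvalues}).
\item Use Ueki's theorem to reduce to a single $f$ with $(f(t))=(f(t^\mu))$.
\item Apply Liu's Mahler-based Theorem \ref{mu_squared_is_one} to get $\mu^2=1$, i.e.\ $\mu_p=\pm1$ at every prime $p$. This still allows different signs at different primes.
\item Rule out $\mu_p=1$, $\mu_q=-1$ using Schinzel's primitive divisor theorem. Take a number field $K$ containing the roots of $f$, and a prime $\mathfrak r$ of $K$ at which $\alpha$ has order $s=p^nq^m$, with $p^n,q^m$ large. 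In $(\mathcal{O}_K)_{\mathfrak r}^\times$ the element $\beta=\alpha^\mu$ is again a root of $f$. Since $\alpha^s$ lies in a pro-$r$ group, $\beta^s=\alpha^{\pm s}$, so $\beta\alpha^{\pm1}$ is a root of unity of bounded order. But modulo $\mathfrak r$ it equals $\alpha^{\mu\pm1}$, whose order is $p^n$ or $q^m$ up to a factor of $2$, which is a contradiction.
\end{enumerate}
This last step is the missing idea.
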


This improves upon a result of Xu \cite[Theorem 1.4]{xu2025regularity}, who proves the same when the manifolds have non-empty boundary.

Our methods are purely number-theoretic, and the proof of Theorem \ref{regularity theorem} uses Liu's ideas in combination with the following theorem.
We remind the reader that a Laurent polynomial $f \in \mathbb{Z}[t, t^{-1}]$ is said to be reciprocal if $f(t) = t^N \cdot f(t^{-1})$ for some $N \in \mathbb{Z}$.

\begin{theorem}
\label{mu_is_plus_minus_one}
Let $f \in \mathbb{Z}[t, t^{-1}] \simeq \mathbb{Z}[\mathbb{Z}]$ be a monic reciprocal Laurent polynomial. Let $\mu \in \widehat{\mathbb{Z}}^\times$ and assume the equality of continuous ideals $(f(t)) = (f(t^\mu))$ in $\widehat{\mathbb{Z}}[[\widehat{\mathbb{Z}}]].$ Then either $\mu = \pm 1,$ or all roots of $f$ are roots of unity.
\end{theorem}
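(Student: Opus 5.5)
The plan is to turn the equality of ideals into a statement about roots of $f$ modulo primes, and then use a Chebotarev--Kummer argument to lift it to an identity in the profinite completion of the multiplicative group generated by the roots. In that completion, a non-torsion root forces $\mu$ to be rational, hence $\mu = \pm 1$.

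\textbf{Step 1: specialization.} Since $f$ is monic and reciprocal, its constant term is $\pm 1$, so all roots of $f$ in any field are nonzero. For a prime $p$ and a root of unity $\zeta \in \overline{\mathbb{F}}_p$ of order $n$ prime to $p$, there is a continuous ring homomorphism $\widehat{\mathbb{Z}}[[\widehat{\mathbb{Z}}]] \to \overline{\mathbb{F}}_p$ sending $t \mapsto \zeta$. It factors through $\mathbb{F}_p[\mathbb{Z}/n]$, and under it $t^\mu \mapsto \zeta^{\mu}$, where $\mu$ acts through its image in $(\mathbb{Z}/n)^\times$. Since $(f(t)) = (f(t^\mu))$, we get $f(\zeta) = 0 \iff f(\zeta^\mu) = 0$. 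Every nonzero element of $\overline{\mathbb{F}}_p$ is such a $\zeta$. Hence for every prime $\mathfrak{p}$ of the splitting field $K$ of $f$ not dividing $\operatorname{disc}(f)$, the reductions $\bar\alpha_1,\dots,\bar\alpha_d$ of the roots are distinct. Moreover, the power map $x \mapsto x^\mu$ on the finite group $\mathbb{F}_\mathfrak{p}^\times$ permutes them. Call this permutation $\sigma_\mathfrak{p} \in S_d$.

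\textbf{Step 2: pigeonhole.} There are only finitely many permutations. So, after fixing a suitable finite Galois extension $L/K$ below, some $\sigma$ equals $\sigma_\mathfrak{p}$ for a set $S$ of primes of positive (upper) density. Let $\Gamma \subset K^\times$ be the finitely generated group generated by the $\alpha_i$, so $\Gamma \cong \mathbb{Z}^r \times (\text{finite})$. Let $\widehat\Gamma$ be its profinite completion. The elements $\gamma_i := \alpha_i^{\mu}\alpha_{\sigma(i)}^{-1} \in \widehat\Gamma$ then reduce to $1$ in $\mathbb{F}_\mathfrak{p}^\times$ for all $\mathfrak{p} \in S$. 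Here reduction extends continuously to $\widehat\Gamma \to \mathbb{F}_\mathfrak{p}^\times$ because the target is finite.

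\textbf{Step 3: Chebotarev and Kummer (main obstacle).} I want to show $\gamma_i = 1$ in $\widehat\Gamma$. It suffices to show $\gamma_i \in \widehat\Gamma^{\,n}$ for every $n$. I would consider the Kummer extensions $K_n = K(\zeta_n, \Gamma^{1/n})$. By Kummer theory, for $\mathfrak{p}$ splitting completely in $K(\zeta_n)$, the Frobenius of $\mathfrak{p}$ in $K_n$ is trivial exactly when $\Gamma$ reduces into the $n$-th powers of $\mathbb{F}_\mathfrak{p}^\times$. The delicate point is the density bookkeeping. The set $S$ is chosen for a single permutation, while the condition must be tested against Frobenius classes in every $K_n$. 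To handle this, I would choose $\sigma$ by Chebotarev in the compositum, running the pigeonhole inside each $\mathrm{Gal}(K_n/K)$ compatibly via a compactness (inverse-limit) argument. This produces a compatible family of Frobenius elements $\mathrm{Frob} \in \varprojlim \mathrm{Gal}(K_n/K)$, all realizing the same $\sigma$.

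I would then pick Frobenii that act on $\Gamma^{1/n}$ with prescribed characters, which is possible by Chebotarev. Combined with $\gamma_i \equiv 1 \pmod{\mathfrak{p}}$, this should give that the Kummer pairing of $\gamma_i$ with every element of $\mathrm{Gal}(K_n/K(\zeta_n))$ is trivial. Hence $\gamma_i \in \widehat\Gamma^{\,n}\cdot(\text{torsion})$. The residual torsion is harmless after passing to $\widehat\Gamma/\text{tors} \cong \widehat{\mathbb{Z}}^r$.

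\textbf{Step 4: conclusion.} In $\widehat{\mathbb{Z}}^r \supset \mathbb{Z}^r$ we now have $\mu\, v_i = v_{\sigma(i)}$, where $v_i \in \mathbb{Z}^r$ is the image of $\alpha_i$. Suppose some root is not a root of unity. By Kronecker's theorem, since $f$ is monic with integer coefficients, some $v_i \neq 0$. Then comparing a nonzero coordinate gives $\mu \in \mathbb{Q} \cap \widehat{\mathbb{Z}}^\times$. Because $\widehat{\mathbb{Z}}^\times \cap \mathbb{Q} = \{\pm 1\}$, it follows that $\mu = \pm 1$. Otherwise all roots are roots of unity, which is the other alternative of the theorem.
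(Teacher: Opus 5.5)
Your route differs from the paper's. The paper combines Liu's $\mu^2=1$ (proved via Mahler's theorem) with Schinzel's primitive divisor theorem, which gives a single prime $\mathfrak{r}$ at which $\alpha$ has order exactly $p^nq^m$. Your Steps 1 and 4 are sound, but Step 3 has a genuine gap, exactly at the point you flagged.

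\textbf{The density set is unusable.} The positive-density set $S$ from Step 2 does not help: a single Chebotarev class of $\mathrm{Gal}(K_n/K)$ has density tending to $0$ as $n$ grows. Nothing forces $S$ to contain primes with prescribed Kummer characters.

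\textbf{The quantifiers are swapped.} Compactness yields at best one compatible element $g\in\varprojlim\mathrm{Gal}(K_n/K)$ whose finite-level restrictions are realized by primes sharing one $\sigma$. When you then ``pick Frobenii with prescribed characters'', you change the primes, and $\sigma_{\mathfrak p}$ may change with them. Pigeonhole gives ``for each character, some $\sigma$''. You use ``one $\sigma$ for all characters''. The resulting claim, that $\gamma_i$ pairs trivially with all of $\mathrm{Gal}(K_n/K(\zeta_n))$, amounts to $\mu v_i=v_{\sigma(i)}$ in $\widehat{\mathbb Z}^r$. That already contains the theorem.

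\textbf{The Kummer defect is not torsion.} Trivial pairing only puts $\gamma$ in $\Gamma\cap K(\zeta_n)^{\times n}$, which can be strictly larger than $\Gamma^n\Gamma_{\mathrm{tors}}$: for $\Gamma=\langle 2\rangle$ one has $2\in\mathbb{Q}(\zeta_8)^{\times 2}$. You need an index bound uniform in $n$. This follows, e.g., from Schinzel's theorem on abelian binomials together with finiteness of the index of $\Gamma$ in its saturation in $K^\times$.

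\textbf{A repair.} One rational character suffices for Step 4. Fix $\lambda\colon\Gamma\to\mathbb Z$ with $\lambda(\alpha_1)\neq0$ for a non-torsion root $\alpha_1$, and let $C$ be the uniform defect bound, so that $\gamma\mapsto\zeta_n^{C\lambda(\gamma)}$ is a legitimate Kummer character. For $n=k!$, Chebotarev in $K(\zeta_n,\Gamma^{1/n})/K$ gives a prime $\mathfrak p_k$, avoiding the finitely many bad primes of Step 1. Writing $q_k=|\mathcal O_K/\mathfrak p_k|$, it satisfies $q_k\equiv1\pmod n$ and $\gamma^{(q_k-1)/n}\equiv\zeta^{C\lambda(\gamma)}\pmod{\mathfrak p_k}$ for all $\gamma\in\Gamma$, with $\zeta$ some primitive $n$-th root of unity. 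Raising $\bar\alpha_1^{\mu}=\bar\alpha_{\sigma_{\mathfrak p_k}(1)}$ to the power $(q_k-1)/n$ gives
\[
C\mu\lambda(\alpha_1)\equiv C\lambda(\alpha_{\sigma_{\mathfrak p_k}(1)})\pmod n .
\]
Pigeonhole $\sigma_{\mathfrak p_k}$ over infinitely many $k$ and use that $\widehat{\mathbb Z}$ is torsion-free. This yields $\mu\lambda(\alpha_1)=\lambda(\alpha_{\sigma(1)})$ in $\widehat{\mathbb Z}$, hence $\mu\in\mathbb Q\cap\widehat{\mathbb Z}^\times=\{\pm1\}$.

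\textbf{Comparison.} Repaired this way, your argument avoids Liu's lemma, Mahler's theorem and primitive divisors entirely. It never uses that $f$ is monic or reciprocal, so it would prove Theorem~\ref{generalized_mu_is_plus_minus_one} directly. The cost is Chebotarev plus a uniform Kummer-defect estimate. The paper's argument is very short once $\mu^2=1$ is available.
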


Here we follow the convention that a root of a Laurent polynomial $f$ in a field $K$ is a non-zero $\alpha \in K$ such that $f(\alpha) = 0$.

\begin{remark}
In \cite[Lemma 6.1]{liu2025finite}, Liu proved that under the same hypotheses as Theorem \ref{mu_is_plus_minus_one}, $\mu^2 = 1$. This still leaves uncountably many options for $\mu$, as at each prime $p$, the projection of $\mu$ to the $p$-adic integers $\mathbb{Z}_p^\times$ is $\pm 1$, but different signs can be chosen at different primes.
\end{remark}

Liu's proof is based on Mahler's result \cite{mahler1935transzendente}. The starting point of our proof will be his result.

Theorem \ref{mu_is_plus_minus_one} is sufficient in proving Theorem \ref{regularity theorem}, as it will be used for Alexander polynomials of fibred hyperbolic $3$-manifolds, which are monic and reciprocal. However, for completeness, we mention that the same result is also true for
general Laurent polynomials.

\begin{theorem}
\label{generalized_mu_is_plus_minus_one}
Let $f \in \mathbb{Z}[t, t^{-1}]$ be a Laurent polynomial. Assume that $(f(t)) = (f(t^\mu))$ as continuous ideals in $\widehat{\mathbb{Z}}[[\widehat{\mathbb{Z}}]].$ Then either $\mu = \pm 1,$ or all roots of $f$ are roots of unity.
\end{theorem}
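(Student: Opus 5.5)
Generalizing Theorem \ref{mu_is_plus_minus_one}, I would reduce to the monic reciprocal case.

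\emph{Step 1: reduction to $f$ primitive and monic.} Write $f = c \cdot t^k \cdot g$ with $c \in \mathbb{Z}$, $k\in\mathbb{Z}$ and $g \in \mathbb{Z}[t]$ primitive with $g(0)\neq 0$. The factors $t^k$ and $t^{\mu k}$ are units of $\widehat{\mathbb{Z}}[[\widehat{\mathbb{Z}}]]$, so they do not affect the ideals. The constant $c$ is harder, since it is not a unit. I would argue prime by prime. Project to $\mathbb{Z}_p[[\widehat{\mathbb{Z}}]]$ and to finite quotients $\mathbb{Z}/p^n[\mathbb{Z}/m]$. The largest power of $p$ dividing every element of the ideal is the same on both sides, so it can be cancelled, because $\mathbb{Z}_p[[\widehat{\mathbb{Z}}]]$ is $p$-torsion free. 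This leaves $(g(t)) = (g(t^\mu))$.

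Next I would handle the non-monic case. Suppose a prime $p$ divides the leading coefficient of $g$. Reduce modulo $p$ to $\mathbb{F}_p[[\widehat{\mathbb{Z}}]]$, and compare the ideal generated by $\bar g(t)$ with that generated by $\bar g(t^\mu)$ on the finite quotients $\mathbb{F}_p[\mathbb{Z}/m]$. The degree drop there should not obstruct the comparison. So I would instead work with the field of roots directly, as follows.

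\emph{Step 2: roots give Galois-type symmetry.} For each prime $\ell$ and each finite quotient $\mathbb{Z}/m$, the ideal equality says the following. For every $m$-th root of unity $\zeta$, in any field of characteristic $\ell$ or $0$ where $\zeta^\mu$ makes sense, $g(\zeta)$ vanishes modulo an ideal if and only if $g(\zeta^{\mu})$ does. Liu's argument via Mahler \cite{mahler1935transzendente}, as in \cite[Lemma 6.1]{liu2025finite}, uses only these consequences at finite levels, not monicity. I expect it to give $\mu^2=1$ whenever $g$ has a root that is not a root of unity, without using reciprocity.

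\emph{Step 3: symmetrize.} Set $F(t) = g(t)\, t^{\deg g} g(t^{-1})$, which is reciprocal. Then
\[
(F(t)) = (g(t))\cdot(g(t^{-1})) = (g(t^\mu))\cdot (g(t^{-\mu})) = (F(t^\mu)).
\]
This holds because inversion $t\mapsto t^{-1}$ is a ring automorphism, so it carries the equality $(g(t))=(g(t^\mu))$ to $(g(t^{-1}))=(g(t^{-\mu}))$. The roots of $F$ are those of $g$ and their inverses, so $F$ has a non-root-of-unity root exactly when $g$ does.

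$F$ need not be monic. The proof of Theorem \ref{mu_is_plus_minus_one} presumably uses monicity to ensure the roots are algebraic integers and the ideals are well behaved at every prime. \textbf{The main obstacle} is therefore to rerun that proof for non-monic reciprocal $F$. I would first try to localize: the primes dividing the leading coefficient of $F$ form a finite set $S$. The components of $\mu$ at primes outside $S$ should be determined by the monic-type argument, which is still valid after inverting $S$. The components at primes in $S$ would then need a separate argument. Two options: use that $\mu$ has the same sign in $\mathbb{Z}_p^\times$ for infinitely many $p$ and extend this by some continuity or density; or choose $m$ coprime to the leading coefficient so that $F$ is monic up to a unit modulo the relevant primes.
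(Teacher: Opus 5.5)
There is a genuine gap, and it is exactly the step you flag as the main obstacle, which you leave unresolved. First, the diagnosis is off. In the proof of Theorem \ref{mu_is_plus_minus_one}, monicity and reciprocity enter only through Liu's conclusion $\mu^2=1$; the Schinzel argument itself never uses them. What Liu's argument really needs at a prime $p$ is a root $\alpha$, not a root of unity, that is a unit in $(\mathcal{O}_K)_{\mathfrak{p}}$ for some $\mathfrak{p}\mid p$. Only then does $t\mapsto\alpha$ extend to a continuous map $\widehat{\mathbb{Z}}[[\widehat{\mathbb{Z}}]]\to(\mathcal{O}_K)_{\mathfrak{p}}$, so that $\alpha^\mu$ makes sense. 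Your expectation in Step 2, that the argument yields $\mu^2=1$ for arbitrary $g$, is therefore unjustified: for $g=2t-3$ it says nothing about $\mu_2$ or $\mu_3$. It only yields $\mu_p=\pm1$ for $p$ outside a finite set $S$, which is Lemma \ref{generalised_mu_squared_is_one}. Symmetrising does not help, since reciprocity does not make roots units: $(2t-3)(2-3t)$ has roots $3/2$ and $2/3$. Step 1 is harmless but unnecessary. The argument only ever evaluates at roots of $f$, and $c$ cancels anyway because $\widehat{\mathbb{Z}}[[\widehat{\mathbb{Z}}]]$ is torsion-free.

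Neither of your options for $p\in S$ works. Since $\widehat{\mathbb{Z}}^\times=\prod_p\mathbb{Z}_p^\times$ with independent factors, no continuity or density argument can recover $\mu_p$ from the $\mu_\ell$ with $\ell\notin S$. Option 2 points the wrong way. $\mu_p$ is only visible through group quotients $\mathbb{Z}/m$ with $m$ divisible by high powers of $p$, read with coefficients at a good prime $r\notin S$. To get a contradiction there, you need a non-torsion root whose order modulo a prime of large residue characteristic is divisible by any prescribed $M$. This is exactly what Schinzel's theorem (Theorem \ref{Schinzel's Theorem}) supplies, and it is the missing idea.

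The paper's argument runs as follows. Fix any $M$. Let $N$ be divisible by the orders of all roots of unity of the form $\beta\gamma^{\pm1}$ with $\beta,\gamma$ roots of $f$, and larger than Schinzel's bound. Pick $\mathfrak{r}$ such that $\alpha$ has exact order $s=NM$ in $(\mathcal{O}_K/\mathfrak{r})^\times$. Since $s$ divides $|\mathcal{O}_K/\mathfrak{r}|-1$, enlarging $N$ forces the residue characteristic $r$ to lie outside $S$, so $\mu_r=\pm1$. Then $\beta=\alpha^\mu$ is a root of $f$, and the pro-$r$ kernel gives $\beta^s=\alpha^{\pm s}$. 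Hence $\beta\alpha^{\pm1}$ is a root of unity of order dividing $N$. Reducing mod $\mathfrak{r}$ gives $\alpha^{N(\mu\pm1)}\equiv1$, so $M\mid\mu\pm1$. For $M>2$ the sign is determined by $\mu \bmod M$ and is compatible along divisibility, so letting $M=n!$ gives $\mu=\pm1$. This handles the bad primes and the coherence of signs at once, with no reduction to primitive, monic or reciprocal $f$.
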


Ueki \cite[Lemma 3.6]{ueki2018profinite}, building on an earlier result of Fried \cite{fried2006cyclic}, proved that if $f, g \in \mathbb{Z}[t, t^{-1}]$ are reciprocal polynomials and $(f(t)) = (g(t^\mu))$ is an equality of continuous ideals, then $f = g$ up to a unit of $\mathbb{Z}[t,t^{-1}]$ (see also Theorem \ref{Ueki_theorem} below). This result was used in \cite{liu2023finite}. As a corollary of our result and Ueki's, we obtain the following

\begin{corollary}
\label{generalized_Ueki}
Let $f, g \in \mathbb{Z}[t, t^{-1}]$ be two Laurent polynomials. Assume that $(f(t)) = (g(t^{\mu}))$ is an equality of continuous ideals in $\widehat{\mathbb{Z}}[[\widehat{\mathbb{Z}}]].$ Moreover, assume that either $f$ or $g$ have a root that is not a root of unity. Then $\mu = \pm 1,$ and $f(t) = g(t^{\pm 1})$ up to a unit of $\mathbb{Z}[t, t^{-1}].$
\end{corollary}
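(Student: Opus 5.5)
The plan is to reduce the corollary to Theorem~\ref{generalized_mu_is_plus_minus_one} by replacing the pair $(f,g)$ with a single polynomial compared with itself under $t\mapsto t^{\pm\mu}$. Once $\mu=\pm1$ is known, the second conclusion follows by descending an ideal equality from $\widehat{\mathbb{Z}}[[\widehat{\mathbb{Z}}]]$ to $\mathbb{Z}[t,t^{-1}]$. Throughout I use that $t\mapsto t^{\nu}$, for $\nu\in\widehat{\mathbb{Z}}^\times$, is a continuous ring automorphism of $\widehat{\mathbb{Z}}[[\widehat{\mathbb{Z}}]]$. Hence $(f(t))=(g(t^\mu))$ is equivalent to $(f(t^{\mu^{-1}}))=(g(t))$, and the roles of $f$ and $g$ are symmetric. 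So I may assume it is $f$ that has a root which is not a root of unity.

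\textbf{Step 1: $f$ and $g$ agree up to inversion.} The automorphism $t\mapsto t^\mu$ preserves the ideal $(t^n-1)$ for every $n\ge1$, because $t^{\mu n}-1$ is a unit multiple of $t^n-1$ in the completed group ring. Reducing the ideal equality modulo $(t^n-1)$ therefore gives isomorphisms of finite (or profinite) quotients
\[
\widehat{\mathbb{Z}}[\mathbb{Z}/n]/(f)\cong \widehat{\mathbb{Z}}[\mathbb{Z}/n]/(g),
\]
so that $|\mathrm{Res}(f,t^n-1)|=|\mathrm{Res}(g,t^n-1)|$ for all $n$, including the vanishing pattern. Equivalently, $\prod_\alpha|\alpha^n-1|=\prod_\beta|\beta^n-1|$ over the roots of $f$ and $g$, up to the leading-coefficient factors. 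From these cyclic resultants I would invoke the Fried--Ueki input (Theorem~\ref{Ueki_theorem}) to conclude that $g(t)$ equals $f(t)$ or $f(t^{-1})$ up to a unit.

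This is the step I expect to be the main obstacle. The cited result of Ueki is stated for reciprocal polynomials, and for general $f$ cyclic resultants only determine $f$ up to replacing some roots by their inverses. If Theorem~\ref{Ueki_theorem} does not cover the general case, my fallback is to avoid Step~1 entirely: put $h(t)=f(t)f(t^{-1})$ and $k(t)=g(t)g(t^{-1})$, which are reciprocal. Then $(h(t))=(k(t^\mu))$, using $(f(t^{-1}))=(g(t^{-\mu}))$, which follows from the automorphism $t\mapsto t^{-1}$. Ueki's reciprocal result gives $h\doteq k$, hence $(h(t))=(h(t^\mu))$. Since $h$ still has a root that is not a root of unity, Theorem~\ref{generalized_mu_is_plus_minus_one} (or even Theorem~\ref{mu_is_plus_minus_one}, after normalizing) gives $\mu=\pm1$ directly.

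\textbf{Step 2: $\mu=\pm1$.} In the main route, substituting $g\doteq f(t^{\varepsilon})$ with $\varepsilon=\pm1$ into the hypothesis gives $(f(t))=(f(t^{\varepsilon\mu}))$. Theorem~\ref{generalized_mu_is_plus_minus_one} then yields $\varepsilon\mu=\pm1$, that is $\mu=\pm1$.

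\textbf{Step 3: descent to $\mathbb{Z}[t,t^{-1}]$.} Now $(f(t))=(g(t^{\pm1}))$ as ideals of $\widehat{\mathbb{Z}}[[\widehat{\mathbb{Z}}]]$, and both generators lie in $\mathbb{Z}[t,t^{-1}]$. I would show that $(p)\widehat{\mathbb{Z}}[[\widehat{\mathbb{Z}}]]\cap\mathbb{Z}[t,t^{-1}]=(p)$ for $p\in\mathbb{Z}[t,t^{-1}]$, by a faithful-flatness argument for the completion; checking this modulo $t^n-1$ and $\ell^k$ should suffice. Granting this, $f$ and $g(t^{\pm1})$ divide each other in $\mathbb{Z}[t,t^{-1}]$, so $f(t)=g(t^{\pm1})$ up to a unit of $\mathbb{Z}[t,t^{-1}]$.
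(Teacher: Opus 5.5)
Your main route for Step~1 does not work, and Step~2 goes with it. Theorem~\ref{Ueki_theorem} needs reciprocity. Moreover, the data you extract (the orders of $\widehat{\mathbb{Z}}[\mathbb{Z}/n]/(f)$, i.e.\ cyclic resultants) cannot recover a non-reciprocal polynomial up to $t\mapsto t^{\pm1}$. For example, $f=(t-2)(t-3)$ and $g=(t-2)(3t-1)$ satisfy $|\mathrm{Res}(f,t^n-1)|=|\mathrm{Res}(g,t^n-1)|=(2^n-1)(3^n-1)$ for every $n$, yet $g$ is not a unit multiple of $f(t)$ or of $f(t^{-1})$.

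Your fallback is exactly the paper's argument for $\mu=\pm1$. You pass to the reciprocal products $h=f(t)f(t^{-1})$ and $k=g(t)g(t^{-1})$, apply Ueki to get $h=k$ up to units, and then apply Theorem~\ref{generalized_mu_is_plus_minus_one} to $(h(t))=(h(t^\mu))$. Drop the aside about Theorem~\ref{mu_is_plus_minus_one}: $h$ need not be monic, and no normalization inside $\mathbb{Z}[t,t^{-1}]$ makes it monic.

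Step~3 is where you genuinely differ from the paper. The route is valid, but you only gesture at its justification. The paper, having reduced to $(f(t))=(g(t))$, proceeds as follows:
\begin{enumerate}
\item it picks a large prime $p$ at which all roots of $f$ are units;
\item it evaluates at $p$-adic roots to see that $f$ and $g$ have the same roots, so $f=cg$ up to a power of $t$;
\item it fixes the constant by reducing modulo the content $a$ in $(\mathbb{Z}/a)[[\widehat{\mathbb{Z}}]]$.
\end{enumerate}

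Your claim is $\phi B\cap A=\phi A$, with $A=\mathbb{Z}[t,t^{-1}]$ and $B=\widehat{\mathbb{Z}}[[\widehat{\mathbb{Z}}]]$. It holds for the following reason. Every finite quotient ring of $A$ factors through some $(\mathbb{Z}/n)[\mathbb{Z}/n]$, because $t$ has finite order there. So $B$ is the profinite completion of the Noetherian ring $A$, and hence $B\cong\prod_{\mathfrak m}\widehat{A}_{\mathfrak m}$, the product over all maximal ideals. Products of flat modules over a Noetherian ring are flat, and each $\mathfrak m$ stays proper in its own factor, so $B$ is faithfully flat over $A$.

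A more elementary version: $\phi B\cap A$ is the closure of $\phi A$ in the profinite topology of $A$. Krull's intersection theorem, applied in each localization $(A/\phi A)_{\mathfrak m}$, shows that this closure is $\phi A$ itself.

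The two approaches trade off as follows. Yours handles roots, multiplicities and content in one stroke, at the cost of this bit of commutative algebra. The paper's ``same roots (and multiplicities)'' in fact needs a further argument for the multiplicities. The paper's route is more hands-on and stays within the $p$-adic evaluation technique used throughout.
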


\subsection{Structure of the note}
In Section $\S$\ref{preliminaries}, we give a brief overview of Liu's results in \cite{liu2023finite}, \cite{liu2025finite}. In Section $\S$\ref{proof_of_regularity} we prove Theorem \ref{mu_is_plus_minus_one} and deduce Theorem \ref{regularity theorem}. In Section $\S$\ref{generalisations} we deal with non-monic non-reciprocal Laurent polynomials, and briefly explain a possible application to profinite rigidity of free-by-cyclic groups.

\subsection{Acknowledgements}
The author would like to thank Henry Wilton, Francesco Fournier-Facio and Julian Wykowski for helpful conversations and motivation in pursuing this problem.

\noindent The author was supported by a C T Taylor Cambridge International Scholarship.

\section{Preliminaries}
\label{preliminaries}
In this section we will give a brief reminder on Liu's results from his papers \cite{liu2023finite}, \cite{liu2025finite}.

\begin{remark}
Stallings' Theorem \cite{stallings1962fibering} (see also \cite{hempel19773}) states that for an irreducible $3$-manifold $M$ the kernel of a map $\pi_1(M) \to \mathbb{Z}$ is finitely generated if and only if it is a surface subgroup $\pi_1(S)$. In such a case, the manifold $M$ fibres over the circle $S^1$, with fibres isomorphic to the surface $S$. We will say that such a map $\pi_1(M) \to \mathbb{Z}$ fibres.
\end{remark}

\begin{remark}
Thurston's (semi-)norm is defined on $H^1(M,\mathbb{Z})$, and has norm-ball a (possibly unbounded) convex rational polyhedron. Thurston proved that a cohomology class fibres if and only if it is in one of a subset of the cones over the maximal-dimensional faces of this polyhedron.

Moreover, this semi-norm is in fact a norm when $M$ has no embedded incompressible tori (other than its boundary tori), which happens for example if $M$ is hyperbolic. See \cite{thurston1986norm} for details.
\end{remark}

Throughout this section we let $M_1,M_2$ denote compact orientable finite-volume hyperbolic $3$-manifolds, with empty or toroidal boundary. Assume that there exists a profinite isomorphism $\Phi: \widehat{\pi_1(M_1)} \to \widehat{\pi_1(M_2)}.$ Such an isomorphism also induces a profinite isomorphism $\Phi_\ast: H^1(M_2, \widehat{\mathbb{Z}}) \to H^1(M_1, \widehat{\mathbb{Z}}).$

\begin{definition}
\label{fibration_regularity}
The map $\Phi$ is said to be $\widehat{\mathbb{Z}}^\times$-regular if there is an isomorphism $F: H^1(M_2, \mathbb{Z}) \to H^1(M_1, \mathbb{Z})$ and a unit $\mu \in \widehat{\mathbb{Z}}^\times$ such that $\Phi_\ast = \mu \cdot F.$

The map $\Phi$ is said to be regular if $\mu = \pm 1$.
\end{definition}

The following is \cite[Theorem 5.1, Corollary 5.3, Theorem 6.1, Corollary 6.2, Corollary 6.3]{liu2023finite}
\begin{theorem}
\label{Thurston_norm_alignment}
Any profinite isomorphism of finite-volume hyperbolic $3$-manifolds is $\widehat{\mathbb{Z}}^\times$-regular. Moreover the corresponding mapping $F$ of the discrete homologies of $M_1, M_2$ preserves the Thurston norm, and maps fibred classes to fibred classes.
Further, $\Phi$ induces an isomorphism between the closures in the profinite topology of the fibred surface subgroups.
\end{theorem}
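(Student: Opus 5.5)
This statement is a compilation of results from \cite{liu2023finite}, so the plan is to reconstruct Liu's argument in the order he gives it rather than look for a new route.

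\textbf{Step 1: $\widehat{\mathbb{Z}}^\times$-regularity.} We use profinite invariance of twisted Reidemeister torsion, equivalently of profinite Alexander-type invariants, together with Galois equivariance on finite quotients. Concretely:
\begin{enumerate}[label=(\roman*)]
\item For each finite quotient of $\widehat{\pi_1(M_i)}$, the isomorphism $\Phi$ identifies the finite-index subgroups and their abelianisations. Hence $\Phi_\ast$ carries the lattice $H^1(M_2,\mathbb{Z})\otimes\widehat{\mathbb{Z}}$ isomorphically onto $H^1(M_1,\mathbb{Z})\otimes\widehat{\mathbb{Z}}$.
\item We want to show that $\Phi_\ast$ is a scalar in $\widehat{\mathbb{Z}}^\times$ times a discrete isomorphism. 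For this we compare the profinite completions of twisted Alexander polynomials associated to finite-image representations. These are defined over the completed group ring $\widehat{\mathbb{Z}}[[H_1]]$ and are matched by $\Phi$ up to units.
\item The discrete polytope (the Newton polytope of the Alexander/torsion invariants, which by Friedl--Kitayama and Agol's virtual fibring recovers the Thurston norm ball of a finite cover) is thereby matched with its image. We then use that a $\widehat{\mathbb{Z}}$-linear map sending one rational polytope with enough vertices to another must be a scalar multiple of a rational linear map. This step uses hyperbolicity, since the Thurston norm is then a genuine norm with full-dimensional unit ball.
\end{enumerate}
This yields $\Phi_\ast=\mu\cdot F$ with $F$ an isomorphism of integral lattices. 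Because the twisted torsion polytopes determine the Thurston norm on a finite cover where the norm is detected, $F$ also preserves the Thurston norm.

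\textbf{Step 2: fibred classes go to fibred classes.} A class $\phi$ is fibred exactly when the twisted Alexander polynomials for all finite quotients are monic with the degree predicted by the norm (Friedl--Vidussi). Monicity is detectable in the completed group ring: the leading coefficient is a unit of $\widehat{\mathbb{Z}}$. Since $\Phi$ matches these invariants up to the twist by $\mu$, and by Step 1 the twist preserves the norm and the extremal terms, $F$ maps fibred cones to fibred cones.

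\textbf{Step 3: closures of fibre subgroups.} For a fibred class $\phi$, the fibre subgroup $\pi_1(S)$ is the kernel of $\phi$. Its closure in the profinite completion is the kernel of $\hat\phi:\widehat{\pi_1(M)}\to\widehat{\mathbb{Z}}$. This uses that finitely generated normal subgroups with quotient $\mathbb{Z}$ are separable enough, via the good-group property of 3-manifold groups. Since $\Phi_\ast(\hat\phi)=\mu F(\phi)$ and $\mu$ is a unit, the two kernels agree, so $\Phi$ maps one closure onto the other.

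\textbf{Main obstacle.} The hardest step is Step 1(iii): extracting an honestly discrete $F$ from torsion data. I would follow Liu's approach. Use the fact that the profinite invariants determine the twisted Alexander polynomials up to the $\widehat{\mathbb{Z}}^\times$-action on exponents. Then show that a single scalar $\mu$ works uniformly, by looking at one-dimensional directions (primitive classes) and using the linearity of $\Phi_\ast$ to glue the scalars together.
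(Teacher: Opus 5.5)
The paper does not prove this statement; it quotes it from Liu, so your sketch has to stand on its own. Steps 1(i), 1(ii) and 3 are fine; Step 3 only needs $\widehat{G}/\overline{N}\cong\widehat{G/N}$, not goodness. The genuine gap is Step 1(iii), which carries the entire content of $\widehat{\mathbb{Z}}^\times$-regularity.

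First, $\Phi_\ast$ is only $\widehat{\mathbb{Z}}$-linear and does not act on $H^1(M;\mathbb{R})$. So ``$\Phi_\ast$ sends one polytope to another'' has no meaning unless you already know that $\Phi_\ast$ carries integral classes to $\widehat{\mathbb{Z}}^\times$-multiples of integral classes, and that is the conclusion.

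Second, the lemma you implicitly need is false. That lemma says: an equality of continuous principal ideals $(\gamma(f))=(g)$ in $\widehat{\mathbb{Z}}[[\widehat{H}]]$, with full-dimensional Newton polytopes, forces $\gamma$ to be $\widehat{\mathbb{Z}}^\times$-regular. Take $f=(1-x)(1-y)\in\mathbb{Z}[\mathbb{Z}^2]$, whose Newton polytope is a square, and $\gamma=\mathrm{diag}(\mu_1,\mu_2)$ with $\mu_1\neq\pm\mu_2$. Both $1-x$ and $1-x^{\mu_1}$ generate the kernel of $\widehat{\mathbb{Z}}[[\widehat{\mathbb{Z}}^2]]\to\widehat{\mathbb{Z}}[[\widehat{\mathbb{Z}}^2/\overline{\langle x\rangle}]]$, and likewise for $y$, so $(\gamma(f))=(f)$. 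Yet $\gamma=\mu F$ with $F\in\mathrm{GL}_2(\mathbb{Z})$ would force $\mu_1/\mu,\ \mu_2/\mu\in\mathbb{Z}\cap\widehat{\mathbb{Z}}^\times=\{\pm1\}$, so $\gamma$ is not $\widehat{\mathbb{Z}}^\times$-regular. Continuous ideals therefore do not remember Newton polytopes in a transportable way, and full-dimensionality of the Thurston ball is not the relevant input. Even in one variable, recovering the degree of $f$ from $(f(t^\mu))$ is a genuine number-theoretic theorem (Fried, Ueki), not a formal step.

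Your fallback in the last paragraph is circular. ``Looking at primitive classes'' presupposes $\Phi_\ast(\phi)\in\widehat{\mathbb{Z}}^\times\cdot H^1(M_1;\mathbb{Z})$ for primitive $\phi$, which is again the statement to be proved. What is missing is a mechanism that, from profinite data alone, forces $\Phi_\ast$ to take integral directions to (closures of) integral directions. It must also come with some non-degeneracy input that rules out torus-like situations such as the example above.

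A secondary problem is Step 2: an element of $\widehat{\mathbb{Z}}[[\widehat{\mathbb{Z}}]]$ has no leading coefficient, so ``monicity is detectable'' is not an argument. Once $\Phi_\ast=\mu F$ is known, degree and monicity are transferred as follows. Relate the finite-quotient twisted Alexander polynomials of $\phi$ and $F(\phi)$ as in Theorem \ref{equality_of_twisted_alexander_polynomials}, then apply Ueki's theorem (Theorem \ref{Ueki_theorem}) to these reciprocal polynomials. Friedl--Vidussi then yields both norm preservation and the correspondence of fibred classes. In particular, norm preservation belongs in this step, not in Step 1.
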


Once we know this theorem, we can align fibrations of the profinitely isomorphic $3$-manifolds, as described in the following commutative diagram:

\begin{equation}
\label{aligned_fibrations}
\begin{gathered}
\xymatrix{
    1 \ar[r]
    & \widehat{\pi_1(S_1)} \ar[r] \ar[d]^{\mathrel{\rotatebox[origin=c]{-90}{$\simeq$}}}
    & \widehat{\pi_1(M_1)} \ar[r] \ar[d]^{\Phi}
    & \widehat{\mathbb{Z}} \ar[r] \ar[d]^{\cdot \mu}
    & 1
    \\ 1 \ar[r]
    & \widehat{\pi_1(S_2)} \ar[r]
    & \widehat{\pi_1(M_2)} \ar[r]
    & \widehat{\mathbb{Z}} \ar[r]
    & 1
}
\end{gathered}
\end{equation}

In this case we will say that the two fibrations of $M_1, M_2$ are \emph{aligned} with respect to the profinite isomorphism $\Phi$. The monodromies of these fibrations are a pair of mapping classes $f_1,f_2$ of the isomorphic surfaces $S_1, S_2$. Since the $3$-manifolds $M_1,M_2$ are hyperbolic, these mapping classes are pseudo-Anosov.

The following is \cite[Lemma 7.1]{liu2025finite}
\begin{theorem}
\label{rigidity_mu_squared_is_one}
The $\widehat{\mathbb{Z}}^\times$-regularity constant $\mu$ satisfies $\mu^2 = 1$.
\end{theorem}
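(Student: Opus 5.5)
We plan to follow Liu's route: reduce the statement to the Alexander-type polynomial identity of Theorem \ref{mu_is_plus_minus_one} (in the form $\mu^2=1$), and then apply Mahler's theorem.

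\textbf{Step 1: set-up.} By Theorem \ref{Thurston_norm_alignment}, $\Phi$ is $\widehat{\mathbb{Z}}^\times$-regular with constant $\mu$. The isomorphism $F$ carries fibred classes to fibred classes. Since $M_1$ is hyperbolic, a virtually fibred structure exists by Agol's theorem, and $\mu$ is unchanged under passing to the finite covers that $\Phi$ matches. So we may assume $M_1$ fibres and align fibrations as in diagram (\ref{aligned_fibrations}). The monodromies $f_1, f_2$ are pseudo-Anosov on surfaces $S_1 \cong S_2$.

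\textbf{Step 2: an ideal equality.} Let $\Delta_i(t)\in\mathbb{Z}[t^{\pm1}]$ be the Alexander polynomial of the fibred class, i.e.\ the characteristic polynomial of $f_{i*}$ on $H_1(S_i;\mathbb{Z})$. It is monic and reciprocal. Following Liu, the aligned diagram gives an isomorphism of completed Alexander modules $H_1(\widehat{\pi_1(S_1)};\widehat{\mathbb{Z}})\cong H_1(\widehat{\pi_1(S_2)};\widehat{\mathbb{Z}})$ over $\widehat{\mathbb{Z}}[[\widehat{\mathbb{Z}}]]$, twisted by $t\mapsto t^\mu$. Taking Fitting ideals yields $(\Delta_1(t)) = (\Delta_2(t^\mu))$.

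\textbf{Step 3: the key constraint.} Here we use Liu's extra input, which is to compare the twisted (or homological) Alexander data of $f_1$ and $f_1^{-1}$. This comes from the symmetry of the profinite module together with the structure of the stretch factor. The goal is to show that $\mu$ acts on the profinite Galois-type data through a single element $\mu\in\widehat{\mathbb{Z}}^\times$ that must preserve a Mahler-measure-type invariant. Concretely, for every $n$ the identity gives $\prod|\Delta(\zeta)|$ over $n$-th roots of unity $\zeta$ equal to the corresponding product over $\zeta^\mu$. Then:
\begin{itemize}
\item These resultant-type products $\mathrm{Res}(\Delta,t^n-1)$ grow like $\lambda^n$, with $\lambda$ the Mahler measure, which exceeds $1$ for pseudo-Anosov monodromy.
\item Mahler's theorem controls the fine asymptotics, and comparing the two sides prevents $\mu$ from acting as the nontrivial involution in a way incompatible with $\mu^{-1}$.
\item This forces $\mu^2$ to fix all the relevant roots, hence $\mu^2=1$ in each $\mathbb{Z}_p^\times$ and so globally.
\end{itemize}

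\textbf{Main obstacle.} The hard step is Step 3, deriving $\mu^2=1$ rather than merely a relation between $\mu$ and $\mu^{-1}$. I would first try to exploit that $\Delta$ is reciprocal, so that $(\Delta(t^\mu))=(\Delta(t^{-\mu}))$. Combining this with the ideal equality applied to both $\Phi$ and $\Phi^{-1}$ should give $(\Delta(t))=(\Delta(t^{\mu^2}))$ with $\mu^2$ lying in the closure of $\langle \pm1\rangle$-compatible classes. Mahler's $p$-adic argument should then kill $\mu^2$ prime by prime.
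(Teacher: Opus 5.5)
There is a genuine gap. Your Step~3 places no constraint on $\mu$, and two of the four ingredients the paper combines are missing.

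The resultant comparison is vacuous. Since $\mu\in\widehat{\mathbb{Z}}^\times$ reduces to a unit mod $n$, the map $\zeta\mapsto\zeta^\mu$ merely permutes the $n$-th roots of unity. Hence $\prod_{\zeta^n=1}\Delta(\zeta^\mu)=\prod_{\zeta^n=1}\Delta(\zeta)$ holds for \emph{every} $\mu$. Information at torsion points is Galois-invariant and cannot distinguish $\mu$ from $1$.

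You are also conflating Mahler measure with Mahler's 1935 theorem. That theorem is a $p$-adic transcendence result about ratios of $p$-adic logarithms of algebraic numbers; growth rates of $\mathrm{Res}(\Delta,t^n-1)$ play no role.

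Your claim that the Mahler measure of $\Delta$ exceeds $1$ for pseudo-Anosov monodromy is false. A pseudo-Anosov in the Torelli group has $\Delta=(t-1)^{2g}$, all of whose roots are roots of unity, and then nothing about $\mu$ can be extracted from $\Delta$. This is exactly why the paper invokes Liu's virtual homological eigenvalue theorem (Theorem~\ref{virtual_homological_eigenvalues}): it passes to a finite fibred cover whose Alexander polynomial has a root off the unit circle.

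The other missing ingredient is Ueki's theorem (Theorem~\ref{Ueki_theorem}). Step~2 only relates two a priori different polynomials, $(\Delta_1(t))=(\Delta_2(t^\mu))$. Your plan of combining $\Phi$, $\Phi^{-1}$ and reciprocity cannot produce a self-relation. Indeed $\Phi^{-1}$ gives $(\Delta_2(t))=(\Delta_1(t^{\mu^{-1}}))$, which is the same equation after the automorphism $t\mapsto t^{\mu^{-1}}$, and substituting one into the other returns a tautology.

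Since $\Delta_1,\Delta_2$ are reciprocal, Ueki gives $\Delta_1=\Delta_2$ up to units. Hence $(\Delta(t))=(\Delta(t^\mu))$ for a $\Delta$ with a root $\alpha$ that is not a root of unity. These are precisely the hypotheses of Theorem~\ref{mu_squared_is_one}, and you cannot invoke it without them.

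The proof of Theorem~\ref{mu_squared_is_one} is also not what you sketch. It runs as follows:
\begin{enumerate}
\item Take a rational prime $p$ and a prime $\mathfrak{p}\mid p$ of a number field $K\ni\alpha$; every $\mathfrak{p}$ works, since $\alpha$ is an algebraic unit because $\Delta$ is monic and reciprocal.
\item The continuous specialisation $t\mapsto\alpha\in(\mathcal{O}_K)_{\mathfrak p}^\times$ shows that $\alpha^\mu$ is again a root of $\Delta$, hence algebraic.
\item Finiteness of the root set gives $\alpha^{\mu^d}=\alpha$ for some $d$.
\item Passing to a power of $\alpha$ in the pro-$p$ kernel of $(\mathcal{O}_K)_{\mathfrak p}^\times\to(\mathcal{O}_K/\mathfrak{p})^\times$ gives $\mu_p^d=1$.
\item Replace $\alpha$ by a power $p$-adically close to $1$ and apply Mahler's theorem to $\mu_p=\log_p(\alpha^\mu)/\log_p(\alpha)$. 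This forces the algebraic number $\mu_p$ to be rational, so $\mu_p=\pm1$.
\end{enumerate}
Since this holds for every $p$, it gives $\mu^2=1$.
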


The proof of this theorem is a combination of four results, which we describe below as they will be used in the proof of Theorem \ref{regularity theorem}.

The following is \cite[Lemma 7.8]{liu2023finite}
\begin{theorem}
\label{equality_of_twisted_alexander_polynomials}
Given aligned fibrations of a profinite isomorphism $\Phi$ of hyperbolic $3$-manifolds, the (twisted) Alexander polynomials of $f_1, f_2$ satisfy the equality of continuous principal ideals in $\widehat{\mathbb{Z}}[[\widehat{\mathbb{Z}}]]$
$$
(\Delta_{f_1}(t^\mu)) = (\Delta_{f_2}(t)).
$$
\end{theorem}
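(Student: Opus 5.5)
The plan is to compare the two fibred manifolds through the finite cyclic covers of the fibrations, where the Alexander module has a classical description. Fix aligned fibrations as in (\ref{aligned_fibrations}), with monodromies $f_1, f_2$ on $S_1 \simeq S_2$. The Alexander polynomial $\Delta_{f_i}(t)$ is, up to units, the characteristic polynomial of the action of $f_i$ on $H_1(S_i;\mathbb{Z})$; in the twisted case it is built from the corresponding twisted homology. Thus it generates the order ideal of the $\mathbb{Z}[t,t^{-1}]$-module $H_1(S_i)$, where $t$ acts by $(f_i)_\ast$.

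The first step is to pass to profinite completions. The diagram (\ref{aligned_fibrations}) gives an isomorphism $\widehat{\pi_1(S_1)} \to \widehat{\pi_1(S_2)}$ that intertwines the outer actions of the generators of $\widehat{\mathbb{Z}}$. The generator of the top copy of $\widehat{\mathbb{Z}}$ maps to $\mu$ times the generator of the bottom copy. Abelianizing, we get an isomorphism of profinite abelian groups $H_1(S_1;\widehat{\mathbb{Z}}) \cong H_1(S_2;\widehat{\mathbb{Z}})$. Under this isomorphism, $(f_1)_\ast$ corresponds to $(f_2)_\ast^{\mu}$, which is well defined because $H_1(S_2;\widehat{\mathbb{Z}})$ is a finitely generated $\widehat{\mathbb{Z}}$-module and so carries a continuous $\widehat{\mathbb{Z}}$-action. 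Equivalently, we get an isomorphism of $\widehat{\mathbb{Z}}[[\widehat{\mathbb{Z}}]]$-modules between $\widehat{\mathbb{Z}}[[\widehat{\mathbb{Z}}]]\otimes H_1(S_1)$ and the twist by $t\mapsto t^\mu$ of $\widehat{\mathbb{Z}}[[\widehat{\mathbb{Z}}]]\otimes H_1(S_2)$. For twisted polynomials, we would use the twisted homology with coefficients in a finite quotient representation, which is transported through $\Phi$ in the same way.

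The second step is to take Fitting (order) ideals. Since $\widehat{\mathbb{Z}}[[\widehat{\mathbb{Z}}]]$ is flat over $\mathbb{Z}[t,t^{-1}]$ in the appropriate sense (it is an inverse limit of the $\widehat{\mathbb{Z}}[\mathbb{Z}/n]$, or one can argue prime by prime with $\mathbb{Z}_p[[\mathbb{Z}_p]]$ and then glue), Fitting ideals commute with completion. For the untwisted case there is a more direct route. Because $H_1(S_i;\mathbb{Z})$ is free of finite rank $r$, its presentation as a $\mathbb{Z}[t^{\pm1}]$-module is $\mathbb{Z}[t^{\pm1}]^r \xrightarrow{t - A_i} \mathbb{Z}[t^{\pm1}]^r$, whose determinant is $\det(t-A_i)=\Delta_{f_i}(t)$. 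After completion the same matrix presents the completed module. Fitting ideals are invariant under module isomorphism, and under the ring automorphism $t\mapsto t^\mu$ of $\widehat{\mathbb{Z}}[[\widehat{\mathbb{Z}}]]$ they transform by that automorphism. This yields $(\Delta_{f_1}(t^\mu)) = (\Delta_{f_2}(t))$ as continuous ideals, with the orientation convention of the diagram fixing on which side $\mu$ appears.

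The main obstacle is making the identification of the actions precise. The map $\Phi$ only intertwines outer automorphisms, and only up to conjugation in $\widehat{\pi_1(S_2)}$ together with the identification $\widehat{\pi_1(M_i)} \cong \widehat{\pi_1(S_i)}\rtimes\widehat{\mathbb{Z}}$. Inner automorphisms act trivially on $H_1$, so the untwisted case is fine. In the twisted case, however, one must track how $\Phi$ transports the representation, and check that the Fitting ideal is computed from a presentation that is compatible with completion. I would handle both issues by working with the finite quotients $\widehat{\mathbb{Z}}[\mathbb{Z}/n]$ and passing to the limit.
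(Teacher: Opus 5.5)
The paper gives no proof of this statement. It is imported from Liu \cite[Lemma 7.8]{liu2023finite}, so there is no internal argument to compare against. For the untwisted polynomial, which is the only case used (in the proof of Theorem \ref{regularity theorem}), your argument is correct. Write $\Lambda=\widehat{\mathbb{Z}}[[\widehat{\mathbb{Z}}]]$, $N_i=H_1(S_i;\widehat{\mathbb{Z}})$, and let $\sigma_\mu$ be the automorphism $t\mapsto t^\mu$. Then the map induced by $\Phi$ is a $\Lambda$-isomorphism $N_1\to\sigma_\mu^*N_2$, and $\mathrm{Fitt}_0$ does the rest.

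Some points to tighten:
\begin{itemize}
\item The flatness remark is unnecessary, since Fitting ideals commute with arbitrary base change.
\item The prime-by-prime variant is wrong as stated. In fact $\Lambda\cong\prod_p\mathbb{Z}_p[[\widehat{\mathbb{Z}}]]$, not $\prod_p\mathbb{Z}_p[[\mathbb{Z}_p]]$, and the image of the ideal equality in $\mathbb{Z}_p[[\mathbb{Z}_p]]$ only sees $\mu_p$.
\item What you actually need is $\Lambda^r/(t-A)\Lambda^r\cong\widehat{\mathbb{Z}}^r$ with $t$ acting by $A$. This holds because $x-\mathrm{ev}_A(x)\in(t-A)\Lambda^r$ for $x$ in the dense subset $\mathbb{Z}[t^{\pm1}]^r$, as $t^kI-A^k$ is divisible by $tI-A$. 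Moreover $(t-A)\Lambda^r$ is closed, being the continuous image of a compact set.
\item You need not hedge on which side $\mu$ appears. We have $\mathrm{Fitt}_0(\sigma_\mu^*N_2)=\sigma_\mu^{-1}\mathrm{Fitt}_0(N_2)$, so $(\Delta_{f_1}(t))=(\Delta_{f_2}(t^{\mu^{-1}}))$, which is exactly the stated form.
\end{itemize}

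For the twisted version, the obstacle you name is not the real one. Take $\rho_1=\rho_2\circ\Phi|_{\pi_1(M_1)}$. Conjugation by a fibre element $g$, together with $v\mapsto\rho(g)v$, acts trivially on $H_*(\pi_1(S);V)$, both discretely and profinitely. So the quotient $\widehat{\mathbb{Z}}$ acts on the completed twisted homology of the fibre, and $\Phi$ intertwines these actions exactly as in the untwisted case.

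The genuine gap is that twisted $H_1(S;V)$ can have $\mathbb{Z}$-torsion. In that case there is no presentation of the form $t-B$, and $\mathrm{Fitt}_0$ need not be principal or equal to $(\Delta)$. Passing to the finite levels $\widehat{\mathbb{Z}}[\mathbb{Z}/n]$ does not address this. The fix is to divide out the finite torsion subgroup $T$:
\begin{itemize}
\item $T$ is pseudo-null over $\mathbb{Z}[t^{\pm1}]$, so it does not change the order.
\item Any isomorphism of completions preserves $T\otimes\widehat{\mathbb{Z}}=T$, so the free quotients $\widehat{\mathbb{Z}}^b$ are again intertwined and your untwisted argument applies verbatim.
\item If the twisted polynomial is normalised as a torsion ratio, treat $H_0$ the same way.
\end{itemize}
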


Liu then uses the following result due to Ueki \cite[Lemma 3.6]{ueki2018profinite}.
\begin{theorem}
\label{Ueki_theorem}
Let $f, g \in \mathbb{Z}[t, t^{-1}] \simeq \mathbb{Z}[\mathbb{Z}]$ be reciprocal Laurent polynomials. Assume that there is a unit $\mu \in \widehat{\mathbb{Z}}^\times$ such that the continuous principal ideals $(f(t^{\mu})) = (g(t))$ in $\widehat{\mathbb{Z}}[[\widehat{\mathbb{Z}}]]$ are equal. Then $f, g \in \mathbb{Z}[\mathbb{Z}]$ are equal up to units, i.e. the ideals in $\mathbb{Z}[\mathbb{Z}]$ generated by $f,g$ are equal.
\end{theorem}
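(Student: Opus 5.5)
Write $\widehat{\mathbb{Z}}[[\widehat{\mathbb{Z}}]] = \varprojlim_{n} \widehat{\mathbb{Z}}[t]/(t^n-1)$. The plan is to reduce the equality of continuous ideals to statements about finite quotients and then extract arithmetic information from them. We may assume $f,g$ are nonzero, and after multiplying by units $\pm t^k$ we may take them to be honest polynomials with nonzero constant term. The basic observation is that, for every $n$, the ideals generated by $f(t)$ and $g(t^\mu)$ coincide in each quotient
\[
\widehat{\mathbb{Z}}[t]/(t^n-1)\;\cong\;\prod_p \mathbb{Z}_p[t]/(t^n-1).
\]
Here $t^\mu$ makes sense in this quotient because the image of $t$ has finite order $n$, so $t^\mu = t^{m}$ for any integer $m\equiv \mu \pmod n$. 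Concretely, for each $n$ and each prime $p$ there are units $u$ in $\mathbb{Z}_p[t]/(t^n-1)$ with $f(t)\equiv u\cdot g(t^{m})$.

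First, I would compare the resultants, which give the orders of finite modules. Consider the quotient $\mathbb{Z}[t]/(t^n-1,f)$, and tensor it with $\mathbb{Z}_p$ for every $p$. Its order is $|\mathrm{Res}(f,t^n-1)|=\prod_{\zeta^n=1}|f(\zeta)|$, provided this is nonzero. Equality of ideals gives an isomorphism of $\widehat{\mathbb{Z}}$-modules
\[
\widehat{\mathbb{Z}}[t]/(t^n-1,f(t))\;\cong\;\widehat{\mathbb{Z}}[t]/(t^n-1,g(t^{m})).
\]
On the other side, $t\mapsto t^{m}$ is an automorphism of $\mathbb{Z}[t]/(t^n-1)$ whenever $\gcd(m,n)=1$. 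So the second module is isomorphic to $\widehat{\mathbb{Z}}[t]/(t^n-1,g(t))$. Hence for every $n$ we get
\[
\prod_{\zeta^n=1}|f(\zeta)|=\prod_{\zeta^n=1}|g(\zeta)|,
\]
including the case where both sides vanish. Recall Fried's argument, as used by Ueki: these cyclic resultants determine $f$ up to units and up to $t\mapsto t^{-1}$, at least for polynomials with no roots of unity. Roots of unity can be handled by looking at which $n$ make the resultant vanish, after dividing out the cyclotomic factors. This yields $g(t)=\pm t^k f(t^{\pm1})$ with no reciprocity assumption. This is the step I would adapt from the cited Fried result rather than reprove.

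Once $g$ agrees with $f(t^{\pm1})$ up to units, the hypothesis becomes $(f(t))=(f(t^{\pm\mu}))$. Replacing $\mu$ by $-\mu$ if necessary, this is exactly the situation of Theorem \ref{generalized_mu_is_plus_minus_one}. Since $f$ or $g$ has a root that is not a root of unity, and the roots of $g$ are the roots of $f$ or their inverses, $f$ itself has a non-root-of-unity root. Theorem \ref{generalized_mu_is_plus_minus_one} then gives $\pm\mu=\pm1$, so $\mu=\pm1$.

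Finally, knowing $\mu=\pm1$, the ideal equality reads $(f(t))=(g(t^{\pm1}))$ inside $\widehat{\mathbb{Z}}[[\widehat{\mathbb{Z}}]]$, with both sides in $\mathbb{Z}[t,t^{-1}]$. I would finish by the same resultant comparison, or directly by Fried's theorem, to conclude that $f(t)=g(t^{\pm1})$ up to a unit of $\mathbb{Z}[t,t^{-1}]$. Choosing the sign consistently with the sign of $\mu$ gives the stated form.

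The main obstacle is the resultant step in the non-reciprocal case. Fried's theorem in its classical form recovers $f$ only up to the ambiguity $f(t)\leftrightarrow t^{\deg f}f(t^{-1})$ and replacing roots by inverses. Without reciprocity this ambiguity is more than just $t\mapsto t^{-1}$ for the whole polynomial: one could invert only some of the roots. If that happens, I would instead use the full ideal equality rather than just orders, comparing the structures of the modules at each prime, to pin down which roots are inverted. I would try first to show that inverting a proper subset of roots is incompatible with a single $\mu$ acting uniformly.
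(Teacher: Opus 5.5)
The paper gives no proof of this statement. It is quoted from Ueki, who builds on Fried's theorem on cyclic resultants of reciprocal polynomials. Your first paragraph is the right start for that route: pass to $\widehat{\mathbb{Z}}[t]/(t^n-1)$, undo $t\mapsto t^m$, and conclude $\mathbb{Z}[t]/(t^n-1,f)\cong\mathbb{Z}[t]/(t^n-1,g)$ for every $n$.

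From there, however, you argue for a different statement. This theorem assumes $f,g$ \emph{reciprocal}, assumes nothing about roots that are not roots of unity, and asserts nothing about $\mu$. Your third paragraph uses ``$f$ or $g$ has a root that is not a root of unity''. That is a hypothesis of Corollary~\ref{generalized_Ueki}, not of this theorem. Here $\mu$ genuinely need not be $\pm1$: for $f=g=t^2+t+1$ one has $(f(t^\mu))=(f(t))$ for every $\mu\in\widehat{\mathbb{Z}}^\times$, since $(t^{3\mu}-1)=(t^3-1)$ and $(t^\mu-1)=(t-1)$, with $t-1$ a non-zero-divisor.

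So the detour through Theorem~\ref{generalized_mu_is_plus_minus_one} is unavailable. It is also unnecessary: for reciprocal $f$ one has $f(t^{-1})=t^{-N}f(t)$, so $g\doteq f(t^{\pm1})$ already \emph{is} the conclusion. In the paper the dependence runs the other way. Corollary~\ref{generalized_Ueki} is deduced from this theorem applied to $f(t)f(t^{-1})$ and $g(t)g(t^{-1})$. Only then is $\mu=\pm1$ obtained, and a $p$-adic comparison of roots removes the subset-inversion ambiguity you worry about.

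Your second paragraph claims that cyclic resultants give $g=\pm t^kf(t^{\pm1})$ ``with no reciprocity assumption''. This is false. Take $f=(t-2)(t-3)$ and $g=(2t-1)(t-3)$; then $\bigl|\prod_{\zeta^n=1}f(\zeta)\bigr|=(2^n-1)(3^n-1)=\bigl|\prod_{\zeta^n=1}g(\zeta)\bigr|$ for all $n$. So the obstacle in your last paragraph is real but self-inflicted, and reciprocity is exactly what Fried's theorem needs.

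What is genuinely missing for the actual statement is the cyclotomic part, which Fried's theorem, as you note, does not cover. The vanishing pattern of the resultants does not even determine which cyclotomic polynomials $\Phi_d$ divide $f$, since all resultants vanish once $(t-1)\mid f$. Still less does it determine multiplicities. For example, $(t-1)^2$ and $(t-1)^4$ are both reciprocal with all cyclic resultants zero, yet $\mathbb{Z}[t]/(t^n-1,\cdot)$ has torsion of order $n$, respectively $n^3$. Nor can you ``divide out'' cyclotomic factors at the level of these finite modules, since they do not split along a factorisation $f=c\cdot f_0$.

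You need a separate step. One option: for a prime $p\nmid d$, use the continuous ring homomorphism $\widehat{\mathbb{Z}}[[\widehat{\mathbb{Z}}]]\to\mathbb{Z}_p[\zeta_d][[X]]$, $t\mapsto\zeta_d(1+X)$.
\begin{itemize}
\item It sends units to units.
\item It sends a Laurent polynomial $h(t)$ to a series whose $X$-adic order is the multiplicity of $\zeta_d$ as a root of $h$.
\item The image of $t^\mu$ is $\zeta_d^{\mu}(1+X')$ with $X'$ a unit multiple of $X$.
\end{itemize}
Applied to $f(t^\mu)$ and $g(t)$, this shows that $f$ and $g$ have the same cyclotomic part $c$. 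Since $(c(t^\mu))=(c(t))$ and $c$ is a non-zero-divisor, you can cancel it. Then apply Fried's theorem to the reciprocal quotients $f/c$ and $g/c$, whose cyclic resultants are all nonzero.
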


The final result is also due to Liu \cite{liu2020virtual} and states that pseudo-Anosov mapping classes virtually have eigenvalues outside the unit circle. We state this in the following theorem

\begin{theorem}
\label{virtual_homological_eigenvalues}
Any fibred hyperbolic $3$-manifold $M$ has a finite cover with Alexander polynomial having a root outside the unit circle.
\end{theorem}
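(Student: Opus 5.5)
The plan is to argue by contradiction, translating the statement into a statement about the action of the monodromy on the homology of finite covers of the fibre. Write $M = S \times [0,1]/\phi$ with $\phi$ pseudo-Anosov of dilatation $\lambda > 1$. For a characteristic finite-index subgroup $K \le \pi_1(S)$, $\phi$ preserves $K$ up to inner automorphisms. So $M$ has a finite cover $M_K$ fibring with fibre $S_K$ (the cover of $S$ corresponding to $K$) and monodromy a lift of some power $\phi^k$. The Alexander polynomial of $M_K$ for this fibration is the characteristic polynomial of $(\phi^k)_*$ acting on $H_1(S_K,\mathbb{Z})$, up to a factor from $H_0$. It is monic with integer coefficients. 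By Kronecker's theorem, if all its roots lay on the unit circle they would all be roots of unity. So it suffices to show: \emph{there is a finite cover $S_K$ on whose homology a lift of a power of $\phi$ is not quasi-unipotent.}

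Suppose instead that every such action is quasi-unipotent. By Shapiro's lemma, $H_1(S_K,\mathbb{C})$ decomposes into twisted homology groups $H_1(S,\mathbb{C}^n_\rho)$, one for each irreducible representation $\rho$ of the finite quotient $G = \pi_1(S)/K$. The assumption then says that every twisted Alexander polynomial of $M$ attached to a representation factoring through a finite quotient has only roots of unity as roots. Equivalently, the twisted homological action of $\phi$ grows at most polynomially in every such $\rho$.

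The key step is to contradict this using the exponential growth of $\phi$. Fix an essential simple closed curve $\gamma$. Its hyperbolic length $\ell(\phi^n(\gamma))$ grows like $\lambda^n$, so the word length of $\phi^n(\gamma)$ also grows exponentially. I would try to find a single finite quotient $G$ and a representation $\rho$ in which $\phi^n$ acts with exponential growth. One natural route is to take $\rho$ close to the discrete faithful holonomy in a suitable sense: reduce the $\mathrm{SL}(2,\mathcal{O})$-holonomy of $M$ modulo primes, which gives finite-image representations. The hyperbolic twisted Alexander polynomial of the holonomy has a root off the unit circle, for instance because its Mahler measure relates to hyperbolic volume in the spirit of Dunfield--Friedl--Jackson. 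The plan would be to propagate this to some finite reduction. Here one has to control how the roots behave under reduction modulo a prime, presumably by working $\ell$-adically and using the Frobenius eigenvalues.

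I expect this last step to be the main obstacle. Passing from the exponential geometric growth, or from the characteristic-zero holonomy, to a genuinely \emph{finite} quotient with a root off the unit circle is not formal. Integer matrices on homology can be quasi-unipotent on all abelian covers, as McMullen's examples show, so the argument must genuinely use non-abelian finite covers. My first attempt would be the $\ell$-adic approach: realise the holonomy over a number ring, embed it into $\mathrm{SL}(2,\mathbb{Z}_\ell)$, and use the fact that a quasi-unipotent twisted action for all congruence quotients would force the $\ell$-adic twisted action to be quasi-unipotent in the limit. That would contradict the root off the unit circle coming from the holonomy after comparing absolute values through an embedding $\overline{\mathbb{Q}}_\ell \simeq \mathbb{C}$. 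If this fails, I would fall back on Liu's original Weil--Petersson translation-length argument from \cite{liu2020virtual}.
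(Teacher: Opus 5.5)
Your reduction is correct and standard. For a characteristic finite-index $K\le\pi_1(S)$, the mapping torus of a lift of $\phi$ to $S_K$ is a finite cover of $M$. Its Alexander polynomial for the lifted fibration is $\det(t-\phi_*)$ on $H_1(S_K;\mathbb{Z})$, which is monic over $\mathbb{Z}$. By Kronecker's theorem, having a root off the unit circle is the same as having a root that is not a root of unity.

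This step only restates the theorem as ``some lift of $\phi$ to a finite cover is not homologically quasi-unipotent''. That is precisely the form of Liu's theorem. The paper does not prove the statement itself; it imports it from \cite{liu2020virtual}, where producing such a cover is the entire, substantial content. Your proposal leaves exactly this step open, so there is a genuine gap. Your fallback (``use Liu's argument'') is just the citation the paper already makes.

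The $\ell$-adic route you sketch would not close the gap either.
\begin{itemize}
\item \textbf{Finite levels cannot see the phenomenon.} On the finite modules $H_1(S;(\mathcal{O}/\mathfrak{l}^n)^2)$ every automorphism has finite order. An eigenvalue $\lambda\equiv 1 \pmod{\mathfrak{l}}$ that is not a root of unity and a nontrivial unipotent Jordan block both give $\ell$-power orders tending to infinity with $n$. So ``quasi-unipotent at every congruence level'' carries nothing that survives to the inverse limit.
\item \textbf{Your hypothesis lives in characteristic zero.} What you would contradict concerns $H_1(S_n;\mathbb{Q})$. Its structure is governed by the \emph{complex} irreducible representations of $G_n=\rho(\pi_1 S)\bmod \mathfrak{l}^n$, not by the tautological $2$-dimensional representation over $\mathcal{O}/\mathfrak{l}^n$. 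The torsion-coefficient twisted homology also mixes in the torsion of $H_1(S_n;\mathbb{Z})$ and the homology of $G_n$.
\item \textbf{No uniformity.} The exponents $k_n,d_n$ with $(\phi^{k_n}-1)^{d_n}=0$ on $H_1(S_n;\mathbb{Q})$ need not be bounded in $n$. Nothing in your sketch relates $\det(t-\phi_*)$ on $H_1(S;K^2_\rho)$ to these characteristic polynomials.
\item \textbf{The starting input is unsupported.} The claim that the holonomy-twisted polynomial has a root which is not a root of unity is not something you can quote from Dunfield--Friedl--Jackson. Their work concerns detection of genus and fibredness, not Mahler measure or volume. Even granting it, the holonomy has infinite image and gives no finite cover.
\end{itemize}

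The missing idea is a real mechanism that turns the exponential growth of $\phi$ into a non-quasi-unipotent action on the rational homology of one specific finite cover. That mechanism is what Liu's paper supplies, and your proposal does not replace it.
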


The final ingredient in proving Theorem \ref{rigidity_mu_squared_is_one} is \cite[Lemma 6.1]{liu2025finite}:
\begin{theorem}
\label{mu_squared_is_one}
Let $f \in \mathbb{Z}[\mathbb{Z}] \simeq \mathbb{Z}[t, t^{-1}]$ and $\mu \in \widehat{\mathbb{Z}}^\times$. Assume that the continuous principal ideals $(f(t)) = (f(t^\mu))$ are equal in $\widehat{\mathbb{Z}}[[\widehat{\mathbb{Z}}]].$ Then $\mu^2 = 1$, or all roots of $f$ are roots of unity.
\end{theorem}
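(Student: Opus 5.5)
We prove the statement labelled \ref{mu_squared_is_one}: if $(f(t)) = (f(t^\mu))$ in $\widehat{\mathbb{Z}}[[\widehat{\mathbb{Z}}]]$, then $\mu^2 = 1$ or every root of $f$ is a root of unity. The approach is to reduce to finite quotients $\mathbb{Z}/n$ of $\widehat{\mathbb{Z}}$, compare roots of $f$ over finite fields, and then use Mahler's theorem \cite{mahler1935transzendente} on orders of algebraic numbers modulo primes.

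\textbf{Step 1: reduction to finite group rings.} Fix $n$ and a prime $p \nmid n$. The ring map $\widehat{\mathbb{Z}}[[\widehat{\mathbb{Z}}]] \to \mathbb{F}_p[\mathbb{Z}/n] \cong \mathbb{F}_p[t]/(t^n-1)$ sends $t^\mu$ to $t^{m}$, where $m \equiv \mu \pmod n$. The equality of ideals $(f(t)) = (f(t^m))$ therefore holds in $\mathbb{F}_p[t]/(t^n-1)$. Since $p\nmid n$, this ring is a product of the fields $\mathbb{F}_p(\zeta)$, one for each $n$-th root of unity $\zeta$ taken up to Frobenius conjugacy. Two principal ideals agree exactly when the generators have the same zero set on each factor. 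Hence, for every $n$-th root of unity $\zeta \in \overline{\mathbb{F}}_p$,
$$f(\zeta)=0 \iff f(\zeta^{m})=0.$$
Iterating gives $f(\zeta)=0 \iff f(\zeta^{m^k})=0$ for all $k$.

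\textbf{Step 2: choosing good primes.} Let $\alpha$ be a root of $f$ that is not a root of unity, and let $K$ be its splitting field. For a prime $\mathfrak{p}$ of $K$ above $p$ at which $\alpha$ is a unit, let $n_{\mathfrak p}$ be the multiplicative order of $\alpha \bmod \mathfrak{p}$. Apply Step 1 with $n = n_{\mathfrak p}$ and $\zeta = \bar\alpha$. It shows that the whole orbit $\{\bar\alpha^{\mu^k}\}$ consists of roots of $\bar f$. There are at most $\deg f$ of these, so the image of $\mu$ in $(\mathbb{Z}/n_{\mathfrak p})^\times$ acts on the roots of $\bar f$ of exact order $n_{\mathfrak p}$, and the set of those roots has bounded size. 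We would like $\mu \equiv \pm 1 \pmod{n_\mathfrak{p}}$, or at least $\mu^2\equiv 1$. The needed input is that $\bar\alpha^{\mu}$ cannot coincide with the reduction of a different root $\beta$ unless there is an equality that already holds in characteristic zero.

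\textbf{Step 3: Mahler's input and the main obstacle.} Mahler's theorem (a $p$-adic / Kummer-type transcendence statement) should say the following. For multiplicatively independent data, or $\alpha$ not a root of unity, the orders $n_{\mathfrak p}$ are unbounded along suitable primes. Moreover, coincidences $\bar\alpha^{a} = \bar\beta$ with $a \bmod n_{\mathfrak p}$ arbitrary can only occur when, for $\mathfrak p$ in a set with unbounded $n_\mathfrak{p}$, $a$ is congruent to an integer $a_0$ with $\alpha^{a_0}=\beta$ in $K$, up to roots of unity. Height considerations force $|a_0|\le 1$ when $\beta$ is also a root of $f$ and the comparison uses Mahler measure. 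Since $\mu^2$ maps $\bar\alpha$ back into the root set, and iterating $\mu^2$ relates $\alpha$ to itself, we get $\mu^2 \equiv 1 \pmod{n_\mathfrak p}$ along a family of $n_\mathfrak{p}$ whose least common multiples are divisible by every integer. This yields $\mu^2 = 1$ in $\widehat{\mathbb{Z}}$.

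The main obstacle is this last step. We must ensure that the set of $n_\mathfrak{p}$ for which the congruence is forced is cofinal in the divisibility order. That is, every $N$ must divide some $n_\mathfrak{p}$, which one gets from Chebotarev applied to $K(\zeta_N, \alpha^{1/N})$. At the same time, the ``accidental'' coincidences modulo $\mathfrak p$ must be controlled; Mahler's result is what handles this. I would first try to handle the coincidences by a pigeonhole over the finitely many pairs of roots $(\alpha,\beta)$, together with the observation that $\alpha^{a}-\beta \in \mathfrak p$ for unboundedly many independent $\mathfrak p$ forces an identity in $K$.
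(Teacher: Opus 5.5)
Steps 1 and 2 are correct, but they only give information in residue fields. Step 3, which carries all the content, is not an argument.

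\emph{What goes wrong in Step 3.} Mahler's theorem \cite{mahler1935transzendente} says nothing about orders of algebraic numbers modulo primes or about coincidences $\bar\alpha^{a}=\bar\beta$. It is the $p$-adic Gelfond--Schneider theorem: if $\alpha,\beta$ are algebraic and $p$-adically close to $1$, then $\log_p\beta/\log_p\alpha$ is rational or transcendental. Your specific claims are not justified:
\begin{itemize}
\item The claim that $\bar\alpha^{\mu}\equiv\bar\beta \pmod{\mathfrak p}$ along an infinite family of primes forces $\alpha^{a_0}=\zeta\beta$ in $K$ with $\mu\equiv a_0 \pmod{n_{\mathfrak p}}$ is a support-problem type statement. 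You do not prove it, and Mahler does not give it. Also, pigeonhole only gives you a fixed $\beta$ on an infinite subset of primes.
\item The height claim $|a_0|\le 1$ is false in general: for $f=(t-2)(t-4)$ the roots $\alpha=2$, $\beta=4$ satisfy $\alpha^2=\beta$.
\end{itemize}

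\emph{What Steps 1--2 do give.} Combined with Schinzel's primitive divisor theorem (which settles the cofinality you flag as the main obstacle), they give $\mu^{D}=1$ for $D=\mathrm{lcm}(1,\dots,\deg f)$. Indeed, the $\mu$-orbit of $\bar\alpha$ is a cycle of length at most $\deg f$, so $\mu^D\equiv 1 \pmod{n_{\mathfrak p}}$ for all good $\mathfrak p$. But this only says each $\mu_p$ is a root of unity in $\mathbb{Z}_p^\times$. For $p\ge 5$ there are roots of unity of order $\ge 3$ there, and excluding them needs characteristic-zero information that residue-field data does not supply.

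\emph{The missing idea} (Liu's argument, reproduced in Lemma~\ref{generalised_mu_squared_is_one}) is to work in the completion $(\mathcal{O}_K)_{\mathfrak p}$, at a prime $\mathfrak p\mid p$ where $\alpha$ is a unit, rather than in $\mathcal{O}_K/\mathfrak p$.
\begin{enumerate}
\item Because $(\mathcal{O}_K)_{\mathfrak p}^\times$ is profinite, $t\mapsto\alpha$ extends to a continuous ring homomorphism $\widehat{\mathbb{Z}}[[\widehat{\mathbb{Z}}]]\to(\mathcal{O}_K)_{\mathfrak p}$. Applying it to $f(t^\mu)=v\cdot f(t)$ gives $f(\alpha^\mu)=0$ exactly. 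So $\beta:=\alpha^\mu$ is one of the finitely many roots of $f$, hence algebraic, and there are no accidental coincidences to control.
\item Finiteness of the roots gives $\alpha^{\mu^d}=\alpha$. Replace $\alpha$ by a power $\xi$ lying in the pro-$p$ group $\ker\bigl((\mathcal{O}_K)_{\mathfrak p}^\times\to(\mathcal{O}_K/\mathfrak p)^\times\bigr)$. This turns the relation into $\xi^{\mu_p^d-1}=1$, so $\mu_p^d=1$ because $\xi$ is not torsion.
\item After passing to powers close to $1$, $\mu_p=\log_p\beta/\log_p\alpha$. Mahler's theorem then makes $\mu_p$ rational or transcendental. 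Being a root of unity, it is rational, hence $\pm1$.
\end{enumerate}
Doing this at each $p$ for which $\alpha$ is a unit at some $\mathfrak p\mid p$ (every $p$ when $\alpha$ is an algebraic unit) gives $\mu_p=\pm1$, i.e.\ $\mu^2=1$. Your reduction to $\mathbb{F}_p[\mathbb{Z}/n]$ with $p\nmid n$ discards exactly the $p$-adic precision this argument uses: the exact identity $\beta=\alpha^{\mu_p}$ between algebraic numbers in $K_{\mathfrak p}$.
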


\section{Proof of regularity}
\label{proof_of_regularity}
Our proof will use the following result due to Schinzel \cite{schinzel1974primitive}.
\begin{theorem}
\label{Schinzel's Theorem}
Let $\alpha$ be an algebraic number in a number field $K$. Assume that $\alpha$ is not a root of unity. Then for any sufficiently large $n$ there exists a prime ideal $\mathfrak{r}$ of $K$ such that the order of $\alpha$ in $(\mathcal{O}_K / \mathfrak{r})^\times$ is $n.$
\end{theorem}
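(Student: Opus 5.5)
The plan is to follow the classical Zsigmondy–Bang strategy, transplanted to the number field $K$: produce, for each large $n$, an element (or ideal) whose prime factors are exactly the primes at which $\alpha$ has order dividing $n$ but no smaller divisor of $n$, and then show by a size comparison that this ideal cannot be supported only on the finitely many \enquote{bad} primes. Concretely, write $\Phi_n$ for the $n$-th cyclotomic polynomial and consider
\[
\Phi_n(\alpha)=\prod_{d\mid n}(\alpha^d-1)^{\mu(n/d)} .
\]
If $\alpha$ is not integral, I would instead work with the fractional ideal $(\alpha)=\mathfrak a\mathfrak b^{-1}$ with coprime integral ideals, or equivalently with the homogenised form $\Phi_n(x,y)$. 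I would also discard the finitely many primes dividing $\mathfrak a\mathfrak b$, since there $\alpha$ is not a unit mod $\mathfrak r$.

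The first step is the algebraic one. Let $\mathfrak r$ be a prime of $K$ not dividing $\mathfrak a\mathfrak b$, and suppose $\mathfrak r$ divides $\Phi_n(\alpha)$. Then the order $m$ of $\alpha$ in $(\mathcal O_K/\mathfrak r)^\times$ divides $n$. If $m<n$, then $n/m$ is a power of the residue characteristic $p$ of $\mathfrak r$, and in particular $p\mid n$. Moreover, by the usual lifting-the-exponent computation, $v_{\mathfrak r}(\Phi_n(\alpha))\le v_{\mathfrak r}(p)\le[K:\mathbb Q]$ once $n$ is large compared with the exponents involved. Consequently the part of the norm $N(\Phi_n(\alpha))$ coming from primes at which $\alpha$ does \emph{not} have exact order $n$ is bounded by $n^{C}$ for a constant $C=C(K,\alpha)$. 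It therefore suffices to show that $\log|N_{K/\mathbb Q}(\Phi_n(\alpha))|$, with the denominator contribution from $\mathfrak b$ normalised away, exceeds $C\log n$ for large $n$.

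The second step is the analytic lower bound, and I expect it to be the main obstacle. Summing over all places, the \enquote{size} of $\Phi_n(\alpha)$ is
\[
\sum_{d\mid n}\mu(n/d)\sum_v \log\max(1,|\alpha^d|_v)\;+\;\text{error},
\]
where the main term is $\varphi(n)\,[K:\mathbb Q]\,h(\alpha)$. By Kronecker's theorem, $h(\alpha)>0$ because $\alpha$ is not a root of unity, so this main term grows like $c\,\varphi(n)\gg n/\log\log n$. The error terms come from places $v$ where $|\alpha|_v=1$, so that $|\alpha^d-1|_v$ could be very small. Here I would invoke Baker's lower bound for linear forms in logarithms (Gelfond's bound also suffices) in the form $\log|\alpha^d-1|_v\ge -c'\log d$. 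Summing over $d\mid n$ then gives an error of size $O(\tau(n)\log n)=o(\varphi(n))$. Places with $|\alpha|_v\neq1$ contribute only geometrically decaying corrections, since $|\alpha^d-1|_v$ is then comparable to $\max(1,|\alpha|_v)^d$.

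Combining the two steps: for $n$ sufficiently large, $\Phi_n(\alpha)$, or the corresponding ideal, has norm larger than anything supportable on primes dividing $n\mathfrak a\mathfrak b$ with bounded multiplicity. Hence some prime $\mathfrak r$ divides it and has $\operatorname{ord}(\alpha\bmod\mathfrak r)=n$ exactly, which proves Theorem~\ref{Schinzel's Theorem}.
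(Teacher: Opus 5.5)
The paper gives no proof of this statement; it is imported from Schinzel, and your outline is the classical Zsigmondy/Postnikova--Schinzel argument behind that citation (the cyclotomic factor $\Phi_n(\alpha)$ normalised by $\mathfrak b^{\varphi(n)}$, bounded multiplicity of the non-primitive prime divisors, and a height lower bound set against Baker/Gelfond estimates at the archimedean places with $|\alpha|_v=1$), so it is correct for $\alpha\neq 0$. The differences worth noting are these: Schinzel's theorem makes the threshold depend only on $[\mathbb{Q}(\alpha):\mathbb{Q}]$, which your version does not give but the paper never uses; and your qualifier ``once $n$ is large'' in the lifting-the-exponent step is genuinely needed, because at the finitely many $\mathfrak r$ with $p-1\le e_{\mathfrak r}$ the increment $v_{\mathfrak r}(\alpha^{mp^k}-1)-v_{\mathfrak r}(\alpha^{mp^{k-1}}-1)$ can exceed $e_{\mathfrak r}$ (as with $\Phi_2(2^N-1)=2^N$ over $\mathbb{Q}$), but only for the single $k$ with $v_{\mathfrak r}(\alpha^{mp^{k-1}}-1)=e_{\mathfrak r}/(p-1)$, hence only for finitely many $n$.
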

\begin{remark}
Equivalently stated, whenever an algebraic number is not multiplicative torsion, then above a certain bound, it can have arbitrary orders in the multiplicative groups of the residue fields at primes. This is reminiscent of the notion of potence (and omnipotence) in finitely generated groups, which has already been used in proving various forms of profinite rigidity, see for instance \cite{wilton2024congruence}.
\end{remark}

We are now ready to prove Theorem \ref{mu_is_plus_minus_one}. In the proof we will not use the fact that the Laurent polynomial $f$ is monic or reciprocal, rather only the conclusion of Liu's theorem that $\mu^2 = 1.$ This will be used in the next section.

\begin{proof}[Proof of Theorem \ref{mu_is_plus_minus_one}]
Assume that $f$ has a root $\alpha$ that is not a root of unity. Then by Theorem \ref{mu_squared_is_one} we know that $\mu^2 = 1.$ Therefore for any prime $p \in \mathbb{Z}$, the projection of $\mu$ to $\mathbb{Z}_p^\times$ is $\mu_p = \pm 1.$

Assume by contradiction that there are primes $p, q \in \mathbb{Z}$ such that $\mu_p = 1$ and $\mu_q = -1$. Assume at first (for simplicity) that both $p, q \neq 2$.

Let $K$ be a number field containing all roots of the polynomial $f$. Let $N$ be an upper bound on all possible orders of roots of unity appearing among $\beta \cdot \gamma, \beta / \gamma$ where $\beta, \gamma$ are roots of $f$.

Pick a pair of powers $p^n, q^m$ of $p, q$ both larger than our bound $N$, as well as the bound occuring in Schinzel's theorem for $\alpha$.
By Theorem \ref{Schinzel's Theorem} we have a prime ideal $\mathfrak{r}$ of $K$ over some rational prime $r \in \mathbb{Z}$ such that the order of $\alpha$ in $\mathcal{O}_K/\mathfrak{r}$ is $s \coloneq p^n \cdot q^m$.

Consider $\alpha$ as an element of the $\mathfrak{r}$-adic completion $(\mathcal{O}_K)_\mathfrak{r}$. Then $\alpha$ is a unit in this ring as it is invertible mod $\mathfrak{r}$. The multiplicative group of this ring $(\mathcal{O}_K)_\mathfrak{r}^\times$ is a profinite group, and so we may consider $\beta = \alpha^\mu \in (\mathcal{O}_K)_\mathfrak{r}^\times$. Since $\beta$ is also a root of $f$ by our assumption, it follows that $\beta$ is an algebraic number, and so $\beta \in K.$

Since $\alpha^s = 1 (\mathrm{mod}\ \mathfrak{r})$, the element $\alpha^s$ belongs to $\mathrm{ker}((\mathcal{O}_K)_\mathfrak{r}^\times \to (\mathcal{O}_K/\mathfrak{r})^\times)$ which is a pro-$r$ group (see for instance \cite[Chapter II, Proposition 3.10]{neukirch1992algebraic}). Therefore, $\beta^s = (\alpha^s)^\mu = (\alpha^s)^{\mu_r} = (\alpha^s)^{\pm 1}$. In particular $(\beta \cdot \alpha^{\pm 1})^s = 1$.
However, since $\alpha, \beta \in K$ are roots of $f$, by assumption $\beta \cdot \alpha^{\pm 1}$ is a root of unity of order $\leq N$. On the other hand, considering $\beta \cdot \alpha^{\pm 1}$ mod $\mathfrak{r}$, we get $\alpha^{\mu \pm 1}$. Since $\alpha$ has order $s = p^n \cdot q^m$ and $\mu= 1 (\mathrm{mod}\ p^n), \mu = -1(\mathrm{mod}\ q^m)$, $\alpha^{\mu \pm 1} \in (\mathcal{O}_K/\mathfrak{r})^\times$ has order that is either $p^n$ or $q^m$. But this is in contradiction as both are greater than $N$.

In order to deal with the case that either $p$ or $q$ is $2$, simply choose the powers $p^n, q^m$ to be strictly greater that $2N$, and then note that the order of $\alpha^{\mu \pm 1}$ is either $\frac{p^n}{\gcd(p^n, 2)}$ or $\frac{q^m}{\gcd(q^m, 2)}$ and therefore still greater than $N.$
\end{proof}

\begin{proof}[Proof of Theorem \ref{regularity theorem}]
By Theorem \ref{Thurston_norm_alignment}, if one of the $3$-manifolds is fibred then so is the other one (this was proved in greater generality by \cite{jaikin2020recognition}).

In general, using the Agol--Wise virtual fibring \cite{agol2008criteria}, \cite{agol2013virtual}, \cite{wise2021structure}, we can find a finite cover of $M_1$ that is fibred, and consider the restriction of $\Phi$ to this finite index subgroup to get a corresponding finite fibred cover of $M_2$. We can then align two fibrations as in Diagram \ref{aligned_fibrations}.

By Theorem \ref{virtual_homological_eigenvalues}, we find a finite fibred cover $\overline{M_1}$ of $M_1$ with Alexander polynomial having roots outside the unit circle. Consider the corresponding cover $\overline{M_2}$ of $M_2$. By Theorem \ref{equality_of_twisted_alexander_polynomials}, we get equality of Alexander polynomials $(\Delta_{\overline{M_1}}(t^\mu)) = (\Delta_{\overline{M_2}}(t))$. By Ueki's result, $\Delta_{\overline{M_1}} = \Delta_{\overline{M_2}}$. By Theorem \ref{mu_is_plus_minus_one}, $\mu = \pm 1.$
\end{proof}

\section{Generalisations}
\label{generalisations}
\subsection{Non-monic non-reciprocal polynomials}
The first step will be to prove an analogue of Theorem \ref{mu_squared_is_one} without the assumption that the Laurent polynomial $f$ is monic and reciprocal. The proof uses the same ideas as the proof in \cite{liu2025finite}, with some slight modifications. We repeat the proof here with the appropriate modification for completeness.
\begin{lemma}
\label{generalised_mu_squared_is_one}
Let $f \in \mathbb{Z}[t, t^{-1}]$ be a Laurent polynomial. Let $\mu \in \widehat{\mathbb{Z}}^\times$ be a unit. Assume that $(f(t)) = f(t^\mu))$ is an equality of continuous ideals in $\widehat{\mathbb{Z}}[[\widehat{\mathbb{Z}}]].$ Moreover, assume that $f$ has a root that is not a root of unity. Then for every sufficiently large prime $p$ in $\mathbb{Z}$, the projection of $\mu$ to $\mathbb{Z}_p$ is $\pm 1.$
\end{lemma}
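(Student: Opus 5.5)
Fix a root $\alpha$ of $f$ that is not a root of unity, a number field $K$ containing all roots of $f$, and a prime ideal $\mathfrak{r}$ of $K$ over a rational prime $r$. The plan is to work in the completion $(\mathcal{O}_K)_\mathfrak{r}$ and use the Galois-type constraint that $\alpha \mapsto \alpha^\mu$ must permute the roots of $f$. First I will remove the non-monic issue: write $f = c\, t^k \prod_i (t - \alpha_i)$. For all but finitely many primes $\mathfrak{r}$, the leading and constant coefficients of $f$ are $\mathfrak{r}$-adic units, and every root $\alpha_i$ is an $\mathfrak{r}$-adic unit. I restrict to such $\mathfrak{r}$. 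Then $t \mapsto \alpha_i$ extends to a continuous ring homomorphism $\widehat{\mathbb{Z}}[[\widehat{\mathbb{Z}}]] \to (\mathcal{O}_K)_\mathfrak{r}$, because $\alpha_i$ lies in the profinite group $(\mathcal{O}_K)_\mathfrak{r}^\times$. The ideal equality $(f(t)) = (f(t^\mu))$ gives $f(t^\mu) = u\, f(t)$ for some $u$ in the completed group ring. Evaluating at $t = \alpha$ yields $f(\alpha^\mu) = 0$ in $(\mathcal{O}_K)_\mathfrak{r}$. So $\alpha^\mu$ is a root of $f$ in the completion, and since $f$ splits in $K$ it equals some root $\beta \in K$.

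Next I pass to residues. Take $\mathfrak{r}$ such that $\alpha$ has order $s$ modulo $\mathfrak{r}$. As in the proof of Theorem \ref{mu_is_plus_minus_one}, $\alpha^s$ lies in the pro-$r$ kernel of reduction, so $\beta^s = (\alpha^s)^{\mu_r}$, where $\mu_r$ is the projection of $\mu$ to $\mathbb{Z}_r$. Modulo $\mathfrak{r}$ we also have $\beta \equiv \alpha^{\mu \bmod s}$. The strategy is to take $s = p$, a large prime, which Theorem \ref{Schinzel's Theorem} allows for all $p$ beyond a bound. We may also assume $p$ exceeds a bound $N$ on the orders of roots of unity among the ratios $\beta/\gamma$ of roots of $f$.

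The key step is to show $\mu \equiv \pm 1 \pmod{p^k}$ for every $k$, not just modulo $p$. To get this I apply Schinzel with order $s = p^k$ for each large $p$ and each $k$. Finitely many $\mathfrak{r}$ are excluded, but Schinzel supplies a prime for each sufficiently large $n$, so this costs nothing. Reduction mod $\mathfrak{r}$ gives $\beta \equiv \alpha^{\mu}$ with $\mu$ read mod $p^k$. Here $\beta$ ranges over the finitely many roots of $f$, but it may depend on $\mathfrak{r}$.

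To pin down $\mu$, I consider for each root $\beta$ the multiplicative relations between $\alpha$ and $\beta$.
\begin{itemize}
\item If $\beta^a = \alpha^b$ holds in $K^\times$ with $(a,b)$ coprime, then reducing mod $\mathfrak{r}$ gives $\alpha^{\mu a - b} \equiv 1$, hence $\mu a \equiv b \pmod{p^k}$.
\item If $\beta$ and $\alpha$ are multiplicatively independent modulo torsion, this congruence must be ruled out for large $p$. This can be done by taking norms or heights, or more simply by using both relations $f(\alpha^\mu) = 0$ and $f(\alpha^{\mu^2}) = 0$ along the orbit.
\end{itemize}
I would rather import Liu's Mahler-based argument from Theorem \ref{mu_squared_is_one}, which is what the lemma's "same ideas" indicates. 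Mahler's result says that $\alpha^\mu$ algebraic, for $\alpha$ not a root of unity, forces $\mu$ to behave rationally at almost all places; that is, $\mu_r$ is rational for $r$ outside the finitely many excluded primes.

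Assuming that, pigeonhole over the finitely many roots gives $\beta^s = \alpha^{\pm s}$ with $\beta = \alpha^{\mu}$, so $\mu_r = \pm 1$ at each good $r$. The complication is that the set of good $r$ comes from the $\mathfrak{r}$ produced by Schinzel, not from the primes $p$ themselves. To transfer the conclusion to every large prime $p$, I use the compatibility $\alpha^{\mu} = \beta$ modulo $p^k$-power order, together with the fact that the same $\beta$ works uniformly: the iterates $\alpha^{\mu^j}$ permute the finite set of roots, so some power $\mu^m$ fixes $\alpha$. Pinning $\mu_p$ rather than just $\mu^m$ is the main obstacle. I expect the step "multiplicative relation between $\alpha$ and $\beta$ implies $\beta = \alpha^{\pm1}$ times torsion" to be the crux, and this is where Mahler's theorem should be used as in \cite{liu2025finite}.
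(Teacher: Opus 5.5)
There is a genuine gap: the proposal never proves the crux, and the architecture you chose cannot supply it. Your first paragraph is correct and matches the paper: you extend $t\mapsto\alpha$ continuously to $(\mathcal{O}_K)_\mathfrak{r}$ and deduce that $\alpha^\mu$ is a root $\beta\in K$ of $f$.

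The detour through Schinzel's theorem is where things go wrong. Choosing auxiliary primes $\mathfrak{r}$ over some $r$ at which $\alpha$ has order $p^k$ is the strategy the paper uses for Theorem \ref{generalized_mu_is_plus_minus_one}. There it needs, as input, that $\mu_r=\pm1$ at the auxiliary prime $r$, which is precisely this lemma. So the route is circular. Correspondingly, you end up writing ``assuming that'' $\mu_r$ is rational, and you are left with a transfer from the auxiliary $r$ back to $p$ that you cannot carry out. The multiplicatively independent case in your bullet list is likewise left to unspecified ``norms or heights''.

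You also misstate Mahler's theorem. It says that for algebraic $\alpha,\beta$ close to $1$, the ratio $\log_p\beta/\log_p\alpha$ is either rational or transcendental. It gives rationality of the exponent only once you know independently that the exponent is algebraic.

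The missing idea is to work at a prime $\mathfrak{p}$ lying over $p$ itself; no Schinzel input is needed.
\begin{enumerate}
\item For $p$ large, $\alpha\in(\mathcal{O}_K)_\mathfrak{p}^\times$. You already observed that $\alpha\mapsto\alpha^\mu$ permutes the finitely many roots, so $\alpha^{\mu^d}=\alpha$ for some $d\geq 1$.
\item Let $\xi=\alpha^m$ with $m$ divisible by $|(\mathcal{O}_K/\mathfrak{p})^\times|$. Then $\xi$ lies in the pro-$p$ kernel of reduction, where $\widehat{\mathbb{Z}}$-powers factor through $\mathbb{Z}_p$. Hence $\xi^{\mu_p^d-1}=1$.
\item If $\mu_p^d-1=p^j\nu$ with $\nu\in\mathbb{Z}_p^\times$, then $\xi^{p^j}=1$, contradicting that $\alpha$ is not a root of unity. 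So $\mu_p^d=1$ and $\mu_p$ is algebraic.
\end{enumerate}
This is how one pins down $\mu_p$ rather than only $\mu^d$: the iterate is used solely to get algebraicity of $\mu_p$.

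Now set $\beta=\alpha^\mu$ and enlarge $m$ so that $|\alpha^m-1|_p,|\beta^m-1|_p<1/p$. Then $\beta^m=(\alpha^m)^{\mu_p}$, so $\mu_p=\log_p(\beta^m)/\log_p(\alpha^m)$; the denominator is nonzero because $\alpha^m$ is not torsion. Mahler's theorem then forces $\mu_p\in\mathbb{Q}$, and a rational root of unity is $\pm1$. This handles every large $p$ directly, so the auxiliary primes, the transfer problem and the independent case never arise.
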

\begin{proof}
Let $\alpha$ be a root of $f$ that is not a root of unity. Consider a number field $K$ containing $\alpha$.

Let $p$ be a prime. Choose a prime ideal $\mathfrak{p}$ of $K$ over $p$. Let $(O_K)_\mathfrak{p}$ be the completion of the ring of integers of $K$ at the prime ideal $\mathfrak{p}.$ Since $\alpha$ has non-zero valuation only at finitely many primes of $K$, for sufficiently large $p$, we have that $\alpha \in (\mathcal{O}_K)_\mathfrak{p}^\times$.

The remainder of the proof now follows \cite[Lemma 6.1]{liu2025finite} precisely. We do not claim any novelty.

By the ideal equality, we know that $\alpha \in (\mathcal{O}_K)_\mathfrak{p}^\times$ is a root of $f$ implies that $\alpha^\mu \in (\mathcal{O}_K)_\mathfrak{p}^\times$ is also a root of $f.$ Therefore $\alpha^{\mu^n}$ is a root of $f$ for every $n$. As $f$ has finitely many roots, $\alpha^{\mu^d} = \alpha$ for some $d$.

In particular, $\alpha^{m \cdot \mu^d} = \alpha^m$ for every integer $m$. Choosing $m$ to be divisible by the order of $(\mathcal{O}_K/\mathfrak{p})^\times$, we may assume that $\alpha^m \in \ker((\mathcal{O}_K)_\mathfrak{p}^\times \to (\mathcal{O}_K/\mathfrak{p})^\times)$ which is a pro-$p$ group. Therefore $\alpha^m = \alpha^{m \cdot \mu^d} = \alpha^{m \cdot \mu_p^d},$ for the projection $\mu_p$ of $\mu$ to $\mathbb{Z}_p.$

Since $\alpha$ is not a root of unity, it now follows that $\mu_p^d = 1.$ Indeed, assume by contradiction that $\mu_p^d - 1 = p^r \cdot \nu$ for some $\nu \in \mathbb{Z}_p^\times.$ Taking a sufficiently large power $\xi = \alpha^m$, we may assume that $\xi - 1$ has arbitrarily large valuation at $\mathfrak{p}$. However, as $\xi^{p^r \cdot \nu} = 1,$ the $\mathfrak{p}$-valuation of $\xi - 1$ is bounded -- in contradiction. (See \cite[Lemma 6.1, Remark 6.2]{liu2025finite} for further details.)

Now, let $\beta = \alpha^\mu$. Since $\beta$ is a root of $f$, it is an algebraic number. Taking a sufficiently large power of $\alpha, \beta$ we may assume that $|\alpha - 1|_p, |\beta - 1|_p < \frac{1}{p}$. Then, by Mahler's theorem \cite{mahler1935transzendente}, $\mu_p = \frac{\log_p(\beta)}{\log_p(\alpha)}$ is either rational or transcendental. Since $\mu_p^d = 1$ it is algebraic, hence rational. Since $\mu_p$ is a rational root of unity, it is $\pm 1.$
\end{proof}

We can now repeat the proof of Theorem \ref{mu_is_plus_minus_one}.
\begin{proof}[Proof of Theorem \ref{generalized_mu_is_plus_minus_one}]
Let $\alpha$ be a root of $f$ that is not a root of unity. Let $K$ be a number field containing all roots of $f$.

Let $M$ be a natural number. We will prove that $\mu \equiv \pm 1 \ (\mathrm{mod}\ M).$ Since this will be true for an arbitrary number $M$, it follows that $\mu = \pm 1$.

As before, consider all pairs of roots $\beta, \gamma$ of $f$, and let $N$ be a number divisible by all orders of roots of unity appearing among $\beta \cdot \gamma, \frac{\beta}{\gamma}$, and greater than the bound appearing in Schinzel's theorem for $\alpha$.

By Schinzel's theorem, let $\mathfrak{r}$ be a prime such that $\alpha$ has order $s = N \cdot M$ in $(\mathcal{O}_K/\mathfrak{r})^\times.$ We further enlarge $N$ such that this prime $r$ satisfies the conclusion of Lemma \ref{generalised_mu_squared_is_one}.

As before, $\beta = \alpha^\mu$ is also a root of $f$. Then $\alpha^s, \beta^s$ are both elements of $\ker((\mathcal{O}_K)_\mathfrak{r}^\times \to (\mathcal{O}_K/\mathfrak{r})^\times)$ which is a pro-$r$ group. Therefore $\beta^s = (\alpha^s)^\mu = (\alpha^s)^{\mu_r} = \alpha^{\pm s}.$
Hence, $(\beta \cdot \alpha^{\pm 1})^s = 1,$ so $\beta \cdot \alpha^{\pm 1}$ is a root of unity.

By the definition of $N$, $\alpha^{N(\mu \pm 1)} = (\beta \cdot \alpha^{\pm 1})^N = 1$ in $(\mathcal{O}_K/\mathfrak{r})^\times.$ As $\alpha$ has order $s = N \cdot M$, it follows that $\mu \pm 1 \equiv 0\ (\mathrm{mod}\ M).$
\end{proof}

In the following proof, for any field $K$ we let $\overline{K}$ denote an algebraic closure of $K.$
\begin{proof}[Proof of Corollary \ref{generalized_Ueki}]
Applying the automorphism $t \mapsto t^{-1}$, we get the equality of ideals $(f(t^{-1})) = (g(t^{-\mu})).$ Therefore $(f(t)f(t^{-1})) = (g(t^\mu)g(t^{-\mu}))$. Hence the reciprocal polynomials $f(t)f(t^{-1}),g(t)g(t^{-1})$ satisfy the conditions of Ueki's theorem (Theorem \ref{Ueki_theorem}) and therefore $f(t)f(t^{-1}) = g(t)g(t^{-1}).$ By Theorem \ref{generalized_mu_is_plus_minus_one}, $\mu = \pm 1.$ Replacing $g(t)$ with $g(t^{-1})$ if necessary, we assume that $\mu = 1.$

Therefore $(f(t)) = (g(t))$ as continuous ideals. Choose a sufficiently large prime $p$ such that every root of $f$ has trivial valuation at every prime over $p$.

Hence, every root $\alpha$ of $f$ in $\overline{\mathbb{Q}_p}$ is also a root of $g$. So, $f(t), g(t)$ have the same roots (and multiplicities) in $\overline{\mathbb{Q}_p}$, and therefore the same is true in $\overline{\mathbb{Q}}$. So, $f(t) = c \cdot g(t) \cdot t^N$, for some $c \in \mathbb{Q}.$ Multiplying by an appropriate power of $t$, we get that $f(t) = cg(t)$. Hence $f(t) = a\cdot h(t), g(t) = b\cdot h(t)$ for a primitive $h$ and $a, b \in \mathbb{Z}$. Considering the image of these ideals in $(\mathbb{Z}/a)[[\widehat{\mathbb{Z}}]]$, we see that $b$ is divisible by $a$. By symmetry, $a = \pm b$, as required.
\end{proof}

\begin{remark}
We mention that an ideal equality of the form appearing in Corollary \ref{generalized_Ueki} also appeared in Wykowski's work on absolute profinite rigidity of solvable Baumslag-Solitar groups. In Equation 5.8 in the proof of \cite[Theorem A]{wykowski2025profinite}, the equality given is equivalent to the equality of continuous ideals $(a \cdot t - b) = (t^\kappa - n),$ where $a, b, n \in \mathbb{Z}$ and $\kappa \in \widehat{\mathbb{Z}}^\times.$ Wykowski used a resolution to a problem of Erdős \cite[Theorem 1]{schoof1997support} to deduce $n = (\frac{b}{a})^{\pm 1}.$ This also follows from Corollary \ref{generalized_Ueki}.
\end{remark}

\subsection{Profinite rigidity of free-by-cyclic groups}
Profinite rigidity of free-by-cyclic groups was studied by Bridson--Reid \cite{bridson2020profinite} and by Hughes--Kudlinska \cite{hughes2025profinite}. An analogue of Theorem \ref{equality_of_twisted_alexander_polynomials} for free-by-cyclic groups was proved in \cite[Lemma 5.8]{hughes2025profinite}. Hadari \cite[Theorem 1.5]{hadari2020homological} proved the analogue of Theorem \ref{virtual_homological_eigenvalues} for free-by-cyclic groups with fully irreducible monodromy. Using these and repeating the proof of Theorem \ref{regularity theorem}, we obtain the following corollary of our results
\begin{corollary}
\label{free-by-cyclic-regularity}
Assume that every profinite isomorphism of free-by-cyclic groups with fully irreducible monodromy is $\widehat{\mathbb{Z}}^\times$-regular. Then every profinite isomorphism of free-by-cyclic groups with fully irreducible monodromy is regular.
\end{corollary}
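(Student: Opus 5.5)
The plan is to repeat the proof of Theorem \ref{regularity theorem} in the free-by-cyclic setting, replacing each $3$-manifold input by its free-by-cyclic analogue. Let $G_i = F_{n_i} \rtimes_{\phi_i} \mathbb{Z}$ for $i=1,2$, with $\phi_i$ fully irreducible, and let $\Phi: \widehat{G_1} \to \widehat{G_2}$ be a profinite isomorphism. By hypothesis $\Phi$ is $\widehat{\mathbb{Z}}^\times$-regular, so there are $F$ and $\mu \in \widehat{\mathbb{Z}}^\times$ with $\Phi_\ast = \mu \cdot F$. The goal is to show $\mu = \pm 1$.

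The first step is to align fibrations as in Diagram \ref{aligned_fibrations}. Since $\Phi_\ast = \mu F$, the integral class $F(\psi_2)$ of the given fibration $\psi_2: G_2 \to \mathbb{Z}$ is a discrete class whose profinite completion, up to the unit $\mu$, matches $\psi_2$ under $\Phi$. The kernel closures should then correspond, giving $1 \to \widehat{F_{n_1}} \to \widehat{G_1} \to \widehat{\mathbb{Z}} \to 1$ mapping to the analogous sequence for $G_2$ with $\cdot\mu$ on the quotient. This is the step I expect to need the most care. The worry is whether the aligned class on $G_1$ is indeed a fibration with fully irreducible monodromy. I would first try to avoid the issue by using the given fibration of $G_1$ and pulling back, appealing to the setup of \cite[Lemma 5.8]{hughes2025profinite}, where the hypotheses are presumably exactly aligned fibrations with $\mu$ on the base.

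Next, pass to finite covers. By Hadari \cite[Theorem 1.5]{hadari2020homological}, there is a finite index subgroup $\overline{G_1} \le G_1$, of the form $\overline{F} \rtimes \mathbb{Z}$ with fully irreducible monodromy, whose (homological) Alexander polynomial has a root outside the unit circle, and so a root that is not a root of unity. Let $\overline{G_2}$ be the corresponding finite index subgroup under $\Phi$, and restrict $\Phi$ to these subgroups. The restriction is again a profinite isomorphism of free-by-cyclic groups with fully irreducible monodromy, since finite index subgroups of fully irreducible mapping tori stay in the class. By assumption it is $\widehat{\mathbb{Z}}^\times$-regular with some constant. I would check that this constant is compatible with $\mu$: the quotient $\widehat{\mathbb{Z}}$ of the cover sits with finite index in that of $G_1$, so multiplication by $\mu$ restricts to multiplication by $\mu$.

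Now apply \cite[Lemma 5.8]{hughes2025profinite}, the analogue of Theorem \ref{equality_of_twisted_alexander_polynomials}. It gives an equality of continuous ideals $(\Delta_{\overline{G_1}}(t^\mu)) = (\Delta_{\overline{G_2}}(t))$ in $\widehat{\mathbb{Z}}[[\widehat{\mathbb{Z}}]]$. These Alexander polynomials need not be reciprocal, which is exactly why we use the general results. By Corollary \ref{generalized_Ueki}, since $\Delta_{\overline{G_1}}$ has a root that is not a root of unity, we get $\mu = \pm 1$. Alternatively, we first obtain $\Delta_{\overline{G_1}}(t) = \Delta_{\overline{G_2}}(t^{\pm1})$ up to units, reduce to $(f(t)) = (f(t^{\pm\mu}))$, and invoke Theorem \ref{generalized_mu_is_plus_minus_one}. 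Hence $\Phi$ is regular.
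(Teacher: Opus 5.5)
Your proposal is essentially the paper's argument, which is only ``repeat the proof of Theorem \ref{regularity theorem}'' with \cite[Lemma 5.8]{hughes2025profinite} and Hadari's theorem as inputs, and with Corollary \ref{generalized_Ueki} replacing Ueki's theorem plus Theorem \ref{mu_is_plus_minus_one}; the paper likewise takes the alignment step you flag for granted as part of the assumed analogue of Theorem \ref{Thurston_norm_alignment}, and your workaround of starting from the given fibration of $G_1$ is the right one, since the matched class on $G_2$ only needs to be fibred and the non-root-of-unity root is needed on one side only. One correction: a fully irreducible monodromy restricted to a finite-index invariant subgroup need not remain fully irreducible (for example, take a pseudo-Anosov map of a surface with one boundary component and lift it to a characteristic cover with several boundary components; a power of the lift fixes a boundary curve, which generates a proper free factor), so you may not re-invoke the hypothesis on $\overline{G_1}, \overline{G_2}$ — but you do not need to, because, as your compatibility remark already shows, the aligned fibrations of $\Phi$ restrict to aligned fibrations of the covers with the same $\mu$.
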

Therefore, given an analogue of Theorem \ref{Thurston_norm_alignment} for free-by-cyclic groups, see also \cite[Question 1.1]{hughes2025profinite}, any profinite isomorphism of fully irreducible free-by-cyclic groups would be regular in the sense of Definition \ref{fibration_regularity}.

\begin{remark}
Any automorphism $\varphi \in \mathrm{Out}(F_n)$ which is exponentially growing will also satisfy the conclusion of Hadari's Theorem. Therefore the statement of Corollary \ref{free-by-cyclic-regularity} also holds if we replace the fully irreducible assumption by the assumption the monodromy is exponentially growing in both the hypothesis and the conclusion.

To see this -- we first note that we may freely pass between $\varphi$ and its powers, as $\varphi$ has a virtual homological eigenvalue outside the unit circle if any only some positive power of $\varphi$ has such an eigenvalue. Hence, if $\varphi$ is not fully irreducible, we may replace $\varphi$ by a reducible power of $\varphi$, i.e. that has a preserved free-factor $H \overset{\ast}{\leq} F_n$. If the sub-action $\varphi|_H$ is fully irreducible, we may use Hadari's result to find a virtual homological eigenvalue off the unit circle. Otherwise, possibly after taking a power of $\varphi$, there is a smaller preserved free factor. Continuing in this way -- we may assume that $\mathrm{rk}(H) = 1.$ We may now consider the quotient action of $\varphi$ on $F_n/\langle\langle H\rangle\rangle$. If this quotient action is polynomially-growing, then so is $\varphi$ (see for instance \cite[Lemma 2.6.7, Definition 5.7.1]{bestvina2000tits}). Therefore, by induction, the quotient action has a virtual homological eigenvalue outside the unit circle - and therefore so does $\varphi$.
\end{remark}

\bibliographystyle{alpha}
\bibliography{bibli}
\vspace{0.5cm}

\vspace{0.5cm}

\noindent{\textsc{DPMMS, Centre for Mathematical Sciences, Wilberforce Road, Cambridge, CB3 0WB, UK}}

\noindent{\textit{Email address:} \texttt{lh917@cam.ac.uk}}

\end{document}